\newtheorem{lem}{Lemma}
\newtheorem{thm}{Theorem}
\newtheorem{dfn}{Definition}
\newtheorem*{prb*}{Problem}
\newtheorem*{thm*}{Theorem}
\providecommand{\e}{\varepsilon}
\providecommand{\ph}{\varphi}
\title{Areas of triangles and $\text{SL}_2$ actions in finite rings}
\author{Alex McDonald}
\begin{document}

\maketitle

\begin{abstract}
In Euclidean space, one can use the dot product to give a formula for the area of a triangle in terms of the coordinates of each vertex.  Since this formula involves only addition, subtraction, and multiplication, it can be used as a definition of area in $R^2$, where $R$ is an arbitrary ring.  The result is a quantity associated with triples of points which is still invariant under the action of $\text{SL}_2(R)$.  One can then look at a configuration of points in $R^2$ in terms of the triangles determined by pairs of points and the origin, considering two such configurations to be of the same type if corresponding pairs of points determine the same areas.  In this paper we consider the cases $R=\mathbb{F}_q$ and $R=\mathbb{Z}/p^\ell \mathbb{Z}$, and prove that sufficiently large subsets of $R^2$ must produce a positive proportion of all such types of configurations.
\end{abstract}

\section{Introduction}

There are several interesting combinatorial problems asking whether a sufficiently large subset of a vector space over a finite field must generate many different objects of some type.  The most well known example is the Erdos-Falconer problem, which asks whether such a set must contain all possible distances, or at least a positive proportion of distances.  More precisely, given $E\subset\mathbb{F}_q^d$ we define the distance set

\[
\Delta(E)=\{(x_1-y_1)^2+\cdots +(x_d-y_d)^2:x,y\in E\}.
\]

Obviously, $\Delta(E)\subset\mathbb{F}_q$.  The Erdos-Falconer problem asks for an exponent $s$ such that $\Delta(E)=\mathbb{F}_q$, or more generally $|\Delta(E)|\gtrsim q$, whenever $|E|\gtrsim q^s$ (Throughout, the notation $X\lesssim Y$ means there is a constant $C$ such that $X\leq CY$, $X\approx Y$ means $X\lesssim Y$ and $Y\lesssim X$, and $O(X)$ denotes a quantity that is $\lesssim X$).  In \cite{IR}, Iosevich and Rudnev proved that $\Delta(E)=\mathbb{F}_q$ if $|E|\gtrsim q^{\frac{d+1}{2}}$   In \cite{Sharpness} it is proved by Hart, Iosevich, Koh, and Rudnev that the exponent $\frac{d+1}{2}$ cannot be improved in odd dimensions, althought it has been improved to $4/3$ in the $d=2$ case (first in \cite{WolffExponent} in the case $q\equiv 3 \text{ (mod 4)}$ by Chapman, Erdogan, Hart, Iosevich, and Koh, then in general in \cite{GroupAction} by Bennett, Hart, Iosevich, Pakianathan, and Rudnev).   Several interesting variants of the distance problem have been studied as well.  A result of Pham, Phuong, Sang, Valculescu, and Vinh studies the problem when distances between pairs of points are replaced with distances between points and lines in $\mathbb{F}_q^2$; they prove that if sets $P$ and $L$ of points and lines, respectively, satisfy $|P||L|\gtrsim q^{8/3}$, then they determine a positive proportion of all distances \cite{Lines}.  Birklbauer, Iosevich, and Pham proved an analogous result about distances determined by points and hyperplanes in $\mathbb{F}_q^d$ \cite{Planes}.\\

We can replace distances with dot products and ask the analogous question.  Let

\[
\Pi(E)=\{x_1y_1+\cdots +x_dy_d:x,y\in E\},
\]

and again ask for an exponent $s$ such that $|E|\gtrsim q^s$ implies $\Pi(E)$ contains all distances (or at least a positive proportion of distances).  Hart and Iosevich prove in \cite{HI} that the exponent $s=\frac{d+1}{2}$ works for this question as well.  The proof is quite similar to the proof of the same exponent in the Erdos-Falconer problem; in each case, the authors consider a function which counts, for each $t\in\mathbb{F}_q$, the number of representations of $t$ as, respectively, a distance and a dot product determined by the set $E$.  These representation functions are then studied using techniques from Fourier analysis.  \\

Another interesting variant of this problem was studied in \cite{Angles}, where Lund, Pham, and Vinh defined the angle between two vectors in analogue with the usual geometric interpretation of the dot product.  Namely, given vectors $x$ and $y$, they consider the quantity

\[
s(x,y)=1-\frac{(x\cdot y)^2}{\|x\|\|y\|},
\]

where $\|x\|=x_1^2+\cdots x_d^2$ is the finite field distance defined above.  Note that since we cannot always take square roots in finite fields, the finite field distance corresponds to the square of the Euclidean distance; therefore, $s(x,y)$ above is the correct finite field analogue of $\sin^2\theta$, where $\theta$ is the angle between the vectors $x$ and $y$.  This creates a variant of the dot product problem, since one can obtain different dot products from the same angle by varying length.  The authors go on to prove that the exponent $\frac{d+2}{2}$ guarantees a positive proportion of angles.  \\

It is of interest to generalize these types of results to point configurations.  By a $(k+1)$-point configuration in $\mathbb{F}_q^d$, we simply mean an element of $(\mathbb{F}_q^d)^{k+1}$.  Throughout, we will use superscripts to denote different vectors in a given configuration, and subscripts to denote the coordinates of each vector.  For example, a $(k+1)$ point configuration $x$ is made up of vectors $x^1,...,x^{k+1}$, each of which has coordinates $x_1^i,x_2^i$.  Given a set $E\subset\mathbb{F}_q^d$, we can consider $(k+1)$-point configurations in $E$ (i.e., elements of $E^{k+1}$) and ask whether $E$ must contain a positive proportion of all configurations, up to some notion of equivalence.  For example, we may view $(k+1)$-point configurations as simplices, and our notion of equivalence is geometric congruence; any two simplices are congruent if there is a translation and a rotation that maps one onto the other.  Since a $2$-simplex is simply a pair of points, and two such simplices are congruent if and only if the distance is the same, congruence classes simply correspond to distance.  Hence, the Erdos-Falconer distance problem may be viewed as simply the $k=1$ case of the simplex congruence problem.  In \cite{Ubiquity}, Hart and Iosevich prove that $E$ contains the vertices of a congruent copy of every non-degenerate simplex (non-degenerate here means the points are in general position) whenever $|E|\gtrsim q^{\frac{kd}{k+1}+\frac{k}{2}}$.  However, in order for this result to be non-trivial the exponent must be $<d$, and that only happens when $\binom{k+1}{2}<d$.  So, the result is limited to fairly small configurations.  This result is improved in \cite{GroupAction} by Bennett, Hart, Iosevich, Pakianathan, and Rudnev, who prove that for any $k\leq d$ a set $E\subset\mathbb{F}_q^d$ determines a positive proportion of all congruence classes of $(k+1)$-point configurations provided $|E|\gtrsim q^{d-\frac{d-1}{k+1}}$.  This exponent is clearly non-trivial for all $k$.  In \cite{Me}, I extended this result to the case $k\geq d$.  \\

In this paper, we consider a different notion of equivalence.  We will consider the problem over both finite fields and rings of integers modulo powers of primes, so I will define the equivalence relation in an arbitrary ring.

\begin{dfn}
Let $R$ be a ring, and let $E\subset R^2$.  We define an equivalence relation $\sim$ on $E^{k+1}$ by $(x^1,...,x^{k+1})\sim (y^1,...,y^{k+1})$ (or more breifly $x\sim y$) if and only if for each pair $i,j$ we have $x^i\cdot x^{j\perp}=y^i\cdot y^{j\perp}$.  Define $\mathcal{C}_{k+1}(E)$ to be the set of equivalence classes of $E$ under this relation.
\end{dfn}

In the Euclidean setting, $\frac{1}{2} |x\cdot y^\perp|$ is the area of the triangle with vertices $0,x,y$.  So, we may view each pair of points in a $(k+1)$-point configuration as determining a triangle with the origin, and we consider two such configurations to be equivalent if the triangles they determine all have the same areas.  As we will prove in section $2$, this equivalence relation is closely related to the action of $\text{SL}_2(R)$ on tuples of points; except for some degenerate cases, two configurations are equivalent if and only if there is a unique $g$ mapping one to the other.  This allows us to analyze the problem in terms of this action; in section 2, we define a counting function $f(g)$ and reduce matters to estimating the sum $\sum_g f(g)^{k+1}$.  In section 3, we show how to turn an estimate for $\sum_g f(g)^2$ into an estimate for $\sum_g f(g)^{k+1}$.  Since we already understand the $k=1$ case (it is essentially the same as the dot product problem discussed above), this reduction allows us to obtain a non-trivial result.  Our first theorem is as follows.

\begin{thm}
\label{MT1}
Let $q$ be a power of an odd prime, and let $E\subset\mathbb{F}_q^2$ satisfy $|E|\gtrsim q^s$, where $s={2-\frac{1}{k+1}}$.  Then $\mathcal{C}_{k+1}(E)\gtrsim \mathcal{C}_{k+1}(\mathbb{F}_q^2)$.
\end{thm}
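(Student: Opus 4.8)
The plan is to translate the counting of equivalence classes into a sum over $\text{SL}_2(\mathbb{F}_q)$ and then boost the known $k=1$ estimate. Following the reduction promised in Section 2, I introduce the counting function
\[
f(g)=|\{x\in E: gx\in E\}|=|E\cap g^{-1}E|,\qquad g\in\text{SL}_2(\mathbb{F}_q).
\]
Since (away from degenerate configurations) two $(k+1)$-tuples are equivalent precisely when a unique $g\in\text{SL}_2(\mathbb{F}_q)$ carries one to the other, the number of ordered pairs of equivalent configurations in $E^{k+1}$ is at most $\sum_g f(g)^{k+1}$ plus a degenerate correction: a pair $(x,y)$ with $gx^i=y^i$ for all $i$ is counted once for each such $g$, and for fixed $g$ the coordinates $x^i$ range independently over $\{x\in E:gx\in E\}$, giving the factor $f(g)^{k+1}$.

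Next I pass from this pair-count to a lower bound on $N=|\mathcal{C}_{k+1}(E)|$. The classes have sizes summing to $\sum_c|c|=|E|^{k+1}$, while $\sum_c|c|^2$ is exactly the number of ordered pairs of equivalent configurations, hence at most $\sum_g f(g)^{k+1}$ plus the degenerate correction. Cauchy--Schwarz gives
\[
|E|^{2(k+1)}=\Big(\sum_c|c|\Big)^2\le N\sum_c|c|^2,
\]
so $N\ge |E|^{2(k+1)}/\sum_c|c|^2$. Since a generic configuration containing two independent points has trivial stabilizer, a generic $\text{SL}_2(\mathbb{F}_q)$-orbit has size $\approx q^3$, whence $\mathcal{C}_{k+1}(\mathbb{F}_q^2)\approx q^{2k-1}$. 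It therefore suffices to establish the upper bound $\sum_g f(g)^{k+1}\lesssim |E|^{2(k+1)}/q^{2k-1}$.

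The base case is the dot-product/area estimate discussed in the introduction: for $|E|\gtrsim q^{3/2}$ one has $\sum_g f(g)^2\lesssim |E|^4/q$, matching the Cauchy--Schwarz lower bound $(\sum_g f(g))^2/|\text{SL}_2(\mathbb{F}_q)|$ that comes from $\sum_g f(g)=q|E|^2$. This says $f$ is essentially flat, with typical value $\overline f=|E|^2/q^2$; the same flatness heuristic $f\equiv\overline f$ on all $\approx q^3$ group elements reproduces exactly the target $q^3\,\overline f^{\,k+1}=|E|^{2(k+1)}/q^{2k-1}$. The boosting step of Section 3 makes this rigorous: writing $f=\overline f+h$ and expanding $f^{k+1}$, the leading term produces the main term exactly, while each fluctuation term is controlled by combining the variance bound $\sum_g h(g)^2\lesssim\sum_g f(g)^2$ from the base case with the pointwise bound $h(g)\le f(g)\le |E|$. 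Estimating $\sum_g h(g)^m\le |E|^{\,m-2}\sum_g h(g)^2$ for $2\le m\le k+1$ and comparing the largest resulting error against the main term is what forces the threshold $s=2-\frac{1}{k+1}$, at which every fluctuation term is $\lesssim |E|^{2(k+1)}/q^{2k-1}$.

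Two lower-order contributions must be treated by hand. First, the degenerate configurations excluded above---those whose points are not in general position (some $x^i=0$, or all lying on a common line through the origin), where the passage to $\text{SL}_2(\mathbb{F}_q)$ is not injective---form an atypical subfamily of $E^{k+1}$, and I would bound their contribution to the pair-count directly, showing it is dominated once $|E|\gtrsim q^s$. Second, the spike at $g=\mathrm{id}$ (and, more generally, at elements fixing a line meeting $E$ in many points) contributes $f(\mathrm{id})^{k+1}=|E|^{k+1}$, which is $\lesssim |E|^{2(k+1)}/q^{2k-1}$ precisely when $|E|\gtrsim q^{(2k-1)/(k+1)}$, comfortably inside our range. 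The main obstacle is the boosting step itself: turning a single second-moment bound into control of all intermediate moments $\sum_g f(g)^m$ without losing the gain---that is, propagating the flatness of $f$ from $L^2$ to $L^{k+1}$ while quarantining the identity spike. This is where the precise form of the error terms, and hence the exponent $2-\frac{1}{k+1}$, is pinned down.
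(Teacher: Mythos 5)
Your architecture matches the paper's almost exactly: the counting function $f(g)=|E\cap g^{-1}E|$, the Cauchy--Schwarz reduction of class-counting to $\sum_g f(g)^{k+1}$, the orbit count $\mathcal{C}_{k+1}(\mathbb{F}_q^2)\approx q^{2k-1}$, and the boosting of an $L^2$ estimate to an $L^{k+1}$ estimate (your direct expansion $f=\overline f+h$ is an equivalent, slightly cleaner form of the paper's inductive lemma, since $\sum_g h(g)=0$ kills the linear term). However, there is a genuine quantitative gap in your base case. You invoke only the upper bound $\sum_g f(g)^2\lesssim |E|^4/q$, and correspondingly the variance bound $\sum_g h(g)^2\lesssim\sum_g f(g)^2$. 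That is too weak: knowing that $\sum_g f(g)^2$ matches the mean-square lower bound $\bigl(\sum_g f(g)\bigr)^2/|\text{SL}_2(\mathbb{F}_q)|$ \emph{up to a multiplicative constant} still allows the variance $\sum_g h(g)^2=\sum_g f(g)^2-\overline f^{\,2}|S|$ to be as large as a constant times $|E|^4/q$. Feeding $R'=|E|^4/q$ into your own estimate $\sum_g |h(g)|^m\le |E|^{m-2}\sum_g h(g)^2$ makes the worst term ($m=k+1$) of size $|E|^{k+3}/q$, and
\[
\frac{|E|^{k+3}}{q}\lesssim \frac{|E|^{2(k+1)}}{q^{2k-1}}
\quad\Longleftrightarrow\quad |E|^{k-1}\gtrsim q^{2k-2},
\]
which for $k\ge 2$ forces the vacuous threshold $|E|\gtrsim q^2$. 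Only for $k=1$ does your bookkeeping close, so ``comparing the largest resulting error against the main term'' does not produce $s=2-\frac{1}{k+1}$ from the bounds you actually stated.

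What the boosting step really requires is an asymptotic with a power-saving error term: $\sum_g f(g)^2=\frac{|E|^4}{q}+O(q^2|E|^2)$, i.e.\ variance $R=O(q^2|E|^2)$. Then $M^{k-1}R=q^2|E|^{k+1}$, and $q^2|E|^{k+1}\le |E|^{2(k+1)}/q^{2k-1}$ is exactly $|E|\ge q^{2-\frac{1}{k+1}}$, the claimed exponent. This refined estimate cannot be extracted from flatness ``up to constants''; the paper obtains it from the pointwise count $\nu(t)=|\{(x,y)\in E\times E: x\cdot y^\perp=t\}|=\frac{|E|^2}{q}+O(|E|q^{1/2})$ with square-root cancellation (from the proof of Theorem 1.4 in \cite{HI}), which gives $\sum_t\bigl(\nu(t)-|E|^2/q\bigr)^2=O(q^2|E|^2)$, together with an $O(q^2|E|^2)$ accounting for pairs lying on lines through the origin (where $\approx q$ group elements realize the same correspondence) when converting $\sum_g f(g)^2$ into $\sum_t\nu(t)^2$. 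Your handling of the degenerate configurations and of the identity spike is fine and matches the paper; the missing ingredient is precisely this square-root-cancellation input, and it is what pins down the exponent.
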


In addition to proving this theorem, we will consider the case where the finite field $\mathbb{F}_q$ is replaced by the ring $\mathbb{Z}/p^\ell\mathbb{Z}$.  The structure of the proof is largely the same; the dot product problem over such rings is studied in \cite{CIP}, giving us the $k=1$ case, and the machinery which lifts that case to arbitrary $k$ works the same way.  However, many details in the proofs are considerably more complicated.  The theorem is as follows.

\begin{thm}
\label{MT2}
Let $p$ be an odd prime, let $\ell\geq 1$, and let $E\subset(\mathbb{Z}/p^\ell\mathbb{Z})^2$ satisfy $|E|\gtrsim \ell^{\frac{2}{k+1}}p^{\ell s}$, where $s=2-\frac{1}{\ell(k+1)}$.  Then $\mathcal{C}_{k+1}(E)\gtrsim \mathcal{C}_{k+1}((\mathbb{Z}/p^\ell\mathbb{Z})^2).$
\end{thm}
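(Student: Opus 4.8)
The plan is to follow the same three-step architecture used for Theorem \ref{MT1}, adapting each step to the ring $R=\mathbb{Z}/p^\ell\mathbb{Z}$. Let $f(g)=|\{x\in E:gx\in E\}|$ for $g\in\text{SL}_2(R)$ be the counting function from Section 2. Since $x^i\cdot x^{j\perp}$ is an $\text{SL}_2(R)$-invariant, two non-degenerate configurations are equivalent exactly when a single $g\in\text{SL}_2(R)$ carries one onto the other, so the number of ordered equivalent pairs in $E^{k+1}$ equals, up to degenerate contributions, $\sum_g f(g)^{k+1}$. A Cauchy--Schwarz argument applied to the sizes of the classes then yields $\mathcal{C}_{k+1}(E)\gtrsim |E|^{2(k+1)}/\sum_g f(g)^{k+1}$, so the whole problem reduces to an upper bound on $\sum_g f(g)^{k+1}$. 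I would first record that $\mathcal{C}_{k+1}(R^2)\approx |R|^{2k-1}=p^{\ell(2k-1)}$, obtained by counting generic $\text{SL}_2(R)$-orbits of size $\approx|\text{SL}_2(R)|=p^{3\ell}(1-p^{-2})$ inside $(R^2)^{k+1}$. Since the average value of $f$ is $\mu\approx|E|^2/|R|^2$, the expected main term $|\text{SL}_2(R)|\,\mu^{k+1}$ is exactly $|E|^{2(k+1)}p^{-\ell(2k-1)}\approx |E|^{2(k+1)}/\mathcal{C}_{k+1}(R^2)$, so the target becomes the statement that $\sum_g f(g)^{k+1}$ is dominated by this main term at the stated threshold.

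The second step is to invoke the Section 3 reduction, which converts control of $\sum_g f(g)^2$ into control of $\sum_g f(g)^{k+1}$ after isolating the central contribution; this reduction is combinatorial and carries over verbatim to $R$. For the base case I would feed in the dot-product estimate over $\mathbb{Z}/p^\ell\mathbb{Z}$ from \cite{CIP}, which plays the role of the $k=1$ instance and supplies the bound on $\sum_g f(g)^2$. Tracking the constants through the reduction is what produces the exponent $s=2-\frac{1}{\ell(k+1)}$ together with the factor $\ell^{2/(k+1)}$: at $\ell=1$ this recovers the finite-field exponent $2-\frac{1}{k+1}$ of Theorem \ref{MT1}, while for $\ell\geq 2$ the weaker savings (a factor of $p$ rather than $p^\ell$) reflect that equidistribution of areas over $R$ is governed by the residue field $\mathbb{F}_p$ together with the $\ell$-step filtration of $R$ by powers of $p$.

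The bulk of the work, and the main obstacle, lies in the ring-specific bookkeeping hidden in the phrase ``up to degenerate contributions.'' Over $R$ the correspondence between $\sim$ and the $\text{SL}_2(R)$-action fails precisely when the relevant quantities $x^i\cdot x^{j\perp}$ are not units, i.e. when vectors are divisible by powers of $p$; such non-primitive configurations must be stratified according to the $p$-adic valuations of the coordinates and of the pairwise determinants, and each stratum counted separately and shown to contribute below the main term. The stabilizer computations, the value $|\text{SL}_2(R)|=p^{3\ell}(1-p^{-2})$, and the orbit-size estimates all acquire corrections coming from zero divisors, and it is this layered structure that introduces the factors of $\ell$ and forces $\frac{1}{\ell(k+1)}$ in place of $\frac{1}{k+1}$. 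I expect the delicate point to be verifying that every degenerate stratum is negligible at the threshold $|E|\gtrsim \ell^{2/(k+1)}p^{\ell s}$, so that $\sum_g f(g)^{k+1}$ is genuinely dominated by its main term and the Cauchy--Schwarz lower bound closes to give $\mathcal{C}_{k+1}(E)\gtrsim\mathcal{C}_{k+1}(R^2)$.
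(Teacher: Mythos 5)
Your outline coincides with the paper's architecture: Cauchy--Schwarz plus the $\text{SL}_2(R)$ characterization of equivalence reduces everything to $\sum_g f(g)^{k+1}$, the Section 3 lemma lifts the $L^2$ bound to $L^{k+1}$, and the base case comes from the dot-product estimate of \cite{CIP}. But as written the proposal defers exactly the steps that carry the mathematical content, and one of the deferred claims is justified incorrectly. You assert that $\mathcal{C}_{k+1}((\mathbb{Z}/p^\ell\mathbb{Z})^2)\approx p^{\ell(2k-1)}$ is ``obtained by counting generic $\text{SL}_2(R)$-orbits of size $\approx |\text{SL}_2(R)|$.'' That computation only counts the classes of \emph{good} configurations; to conclude $\mathcal{C}_{k+1}(E)\gtrsim\mathcal{C}_{k+1}(R^2)$ you need an \emph{upper} bound on the total number of classes, and the danger is precisely that degenerate configurations have small orbits and could therefore split into far more than $p^{\ell(2k-1)}$ classes. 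The paper spends all of Section 6 on this: it stratifies configurations by the largest $m$ such that $p^m$ divides every $x^i\cdot x^{j\perp}$, bounds the number of $m$-bad tuples by $p^{(2\ell-m)(k+1)+m}$, and then proves via a stabilizer computation (using the adjugate identity and the bilinearity of $\det(A+B)-\det(A)-\det(B)$) that each $m$-bad orbit has size at least $p^{3\ell-2m}$, so the $m$-bad classes number $\lesssim p^{\ell(2k-1)+(2-k)m}$. Without some argument of this kind the reduction to $\gtrsim p^{\ell(2k-1)}$ classes does not prove the theorem.

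The second gap is the $L^2$ estimate itself. Saying you will ``feed in'' the result of \cite{CIP} skips the nontrivial step of converting $\sum_g f(g)^2$ into $\sum_t\nu(t)^2$ over the ring: one must first discard the pairs $(x^1,y^1)$ whose first coordinates are non-units (bounding by $p^\ell$ the number of $g\in\text{SL}_2$ with $gx^1=y^1$ when $x_1^1$ is a unit) and then the pairs with $x^1\cdot x^{2\perp}$ a non-unit, each requiring its own count, before the identity $\sum_g f(g)^2 = \sum_t\nu(t)^2 + O(\ell^2|E|^2(p^\ell)^{3-\frac{1}{\ell}})$ is available. Only with that explicit error term, together with $A\approx |E|^2/p^{2\ell}$, $|S|\approx p^{3\ell}$, $M\leq|E|$, does Lemma \ref{induction} yield
\[
|E|^{2(k+1)}\lesssim \mathcal{C}_{k+1}(E)\left(\ell^2|E|^{k+1}(p^\ell)^{3-\frac{1}{\ell}}+\frac{|E|^{2(k+1)}}{p^{\ell(2k-1)}}\right),
\]
from which the threshold $|E|\gtrsim\ell^{\frac{2}{k+1}}p^{\ell s}$ with $s=2-\frac{1}{\ell(k+1)}$ can actually be read off. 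Your heuristic that the exponent ``falls out of tracking constants'' is true, but the constants cannot be tracked until the degenerate contributions to the $L^2$ sum have been explicitly bounded, which the proposal does not do.
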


We first note that, as we would expect, Theorem \ref{MT2} coincides with Theorem \ref{MT1} in the case $\ell=1$.  We also note that, for fixed $p$ and $k$, the exponent in Theorem \ref{MT2} is always less than 2, but it tends to $2$ as $\ell\to\infty$.  This does not happen in the finite field case, where the exponent depends on $k$ but not on the size of the field.  \\

Finally, we want to state the extent to which these results are sharp.  There are examples which show that the exponent must tend to $2$ as $k\to\infty$ in the finite field case, and as either $k\to\infty$ or $\ell\to\infty$ in the $\mathbb{Z}/p^\ell\mathbb{Z}$ case.

\begin{thm}[Sharpness]
We have the following:

\begin{enumerate}[i]
	\item For any $s<2-\frac{2}{k+1}$, there exists $E\subset\mathbb{F}_q^2$ such that $|E|\approx q^s$ and $\mathcal{C}_{k+1}(E)=o(\mathcal{C}_{k+1}(\mathbb{F}_q^2))$.
	\item For any $s<2-\min\left(\frac{2}{k+1},\frac{1}{\ell}\right)$, there exists $E\subset(\mathbb{Z}/p^\ell\mathbb{Z})^2$ such that $|E|\approx p^{\ell s}$ and $\mathcal{C}_{k+1}(E)=o(\mathcal{C}_{k+1}((\mathbb{Z}/p^\ell\mathbb{Z})^2))$.
\end{enumerate}

\end{thm}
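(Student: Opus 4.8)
The plan is to exhibit, for each admissible $s$, a set $E$ whose configurations fall into very few $\sim$-classes, so I first record the sizes of the ambient type sets: $\mathcal{C}_{k+1}(\mathbb{F}_q^2)\approx q^{2k-1}$ and $\mathcal{C}_{k+1}((\mathbb{Z}/p^\ell\mathbb{Z})^2)\approx (p^\ell)^{2k-1}$. Indeed, on the generic locus a $\sim$-class is exactly an $\mathrm{SL}_2$-orbit, and $|\mathrm{SL}_2(R)|\approx q^3$ while $|(R^2)^{k+1}|=q^{2(k+1)}$, giving $\approx q^{2k-1}$ classes; the matching lower bound comes from checking that every area tuple $([x^1,x^2],[x^1,x^l],[x^2,x^l])_{l\ge 3}$ with $[x^1,x^2]$ invertible is realized. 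The leverage for sharpness is that \emph{every} $g\in\mathrm{SL}_2(R)$ preserves all areas, since $[gx,gy]=\det(g)\,[x,y]=[x,y]$. Hence if $E$ is invariant under a subgroup $H\le\mathrm{SL}_2(R)$ acting freely on $E$, then two configurations in the same $H$-orbit are $\sim$-equivalent, so
\[
\mathcal{C}_{k+1}(E)\le \#(H\text{-orbits on }E^{k+1})=|E|^{k+1}/|H|.
\]

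For part (i), and for the regime $\tfrac{2}{k+1}\le\tfrac1\ell$ of part (ii), I take $H=U=\{(\begin{smallmatrix}1&t\\0&1\end{smallmatrix}):t\in R\}$, so $|H|=q$ (here $R=\mathbb{F}_q$, or $R=\mathbb{Z}/p^\ell\mathbb{Z}$ with $q:=p^\ell$), and let $E=\{(x,c):x\in R,\ c\in C\}$ be the union of $r:=|C|$ horizontal lines at distinct unit heights $C\subset R^{\times}$. Then $E$ is $U$-invariant, and since each height is a unit the action $(x,c)\mapsto(x+tc,c)$ is free, so the displayed bound gives $\mathcal{C}_{k+1}(E)\le |E|^{k+1}/q=r^{k+1}q^{k}$. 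Choosing $r\approx q^{s-1}$ makes $|E|=rq\approx q^{s}$ and $\mathcal{C}_{k+1}(E)\lesssim q^{s(k+1)-1}$, whose exponent is strictly below $2k-1$ precisely when $s<2-\tfrac{2}{k+1}$; thus it is $o(q^{2k-1})$. For $s<1$ one instead places $E$ inside a line through the origin, where all areas vanish and $\mathcal{C}_{k+1}(E)=1$.

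For the remaining regime $\tfrac1\ell<\tfrac2{k+1}$ of part (ii) I exploit the ideal structure instead of a group action, taking $E=R\times pR$ (and, for smaller $s$, a subset of it), so $|E|=p^{2\ell-1}=q^{2-1/\ell}$. Writing $x^l=(a_l,pb_l)$, every area collapses by a factor of $p$:
\[
[x^i,x^j]=a_i(pb_j)-(pb_i)a_j=p\,(a_ib_j-a_jb_i)=p\,[\tilde x^i,\tilde x^j],\qquad \tilde x^l:=(a_l,b_l).
\]
Since $pz=pw$ in $\mathbb{Z}/p^\ell\mathbb{Z}$ is equivalent to $z\equiv w \pmod{p^{\ell-1}}$, two configurations in $E$ are $\sim$-equivalent if and only if the reductions of the de-scaled configurations $\tilde x,\tilde y$ are $\sim$-equivalent over $\mathbb{Z}/p^{\ell-1}\mathbb{Z}$. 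As the $\tilde x^l$ range over all of $R^2$ their reductions cover $(\mathbb{Z}/p^{\ell-1}\mathbb{Z})^2$, so $\mathcal{C}_{k+1}(E)\le\mathcal{C}_{k+1}((\mathbb{Z}/p^{\ell-1}\mathbb{Z})^2)\approx (p^{\ell-1})^{2k-1}=p^{-(2k-1)}\,\mathcal{C}_{k+1}((\mathbb{Z}/p^\ell\mathbb{Z})^2)=o(\mathcal{C}_{k+1}((\mathbb{Z}/p^\ell\mathbb{Z})^2))$ as $p\to\infty$. Since the two constructions reach thresholds $2-\tfrac2{k+1}$ and $2-\tfrac1\ell$, for any $s$ below $2-\min(\tfrac2{k+1},\tfrac1\ell)=\max\!\big(2-\tfrac2{k+1},\,2-\tfrac1\ell\big)$ at least one of them applies, which proves (ii).

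The step I expect to be most delicate is making the ambient count $\mathcal{C}_{k+1}(R^2)\approx q^{2k-1}$ rigorous over $\mathbb{Z}/p^\ell\mathbb{Z}$: both the ``orbit $=$ class'' dictionary and the realizability of a positive proportion of area tuples require separating the generic, invertible-area locus from the degenerate loci on which zero divisors inflate stabilizers, and it is exactly these valuation phenomena that power the $2-\tfrac1\ell$ construction. Once the ambient counts are in hand, the two constructions above reduce to bookkeeping.
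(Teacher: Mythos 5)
Your proposal is correct, and both of its constructions take a genuinely different route from the paper's. For part (i) (and the first regime of part (ii)) the paper takes $E$ to be a union of $q^{s-1}$ circles, invariant under the rotation subgroup of $\text{SL}_2$, so that each configuration is equivalent to $\gtrsim q$ others; your union of horizontal lines at unit heights, invariant under the unipotent subgroup $U$, is the same orbit-counting idea run with a different one-parameter subgroup. Your version is a bit cleaner: $|U|=q$ exactly, freeness is immediate from the heights being units, and you sidestep a small imprecision in the paper, which quotes $O_2(\mathbb{F}_q)$ even though its determinant $-1$ elements negate rather than preserve areas (only $SO_2$ is actually usable there, which does not affect the count). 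For the regime $\frac{1}{\ell}<\frac{2}{k+1}$ of part (ii), the paper uses the set $\{(t+pn,t+pm)\}$, observes that all of its configurations are bad, and invokes the bad-class count from Theorem \ref{target}; you instead take $E=R\times pR$ and divide every area by $p$, identifying $\mathcal{C}_{k+1}(E)$ with $\mathcal{C}_{k+1}((\mathbb{Z}/p^{\ell-1}\mathbb{Z})^2)\approx (p^{\ell-1})^{2k-1}$. This is rigorous, needs only Theorem \ref{target} at level $\ell-1$, and for $k\geq 3$ gives a quantitatively stronger saving ($p^{-(2k-1)}$ versus the paper's $p^{-(k-2)}$-type bound). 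Note that in this regime both your argument and the paper's save only powers of $p$, not of $p^{\ell}$, so both require the asymptotic $p\to\infty$; your explicit statement of this matches the paper's implicit convention. Finally, the ambient counts $\mathcal{C}_{k+1}(\mathbb{F}_q^2)\approx q^{2k-1}$ and $\mathcal{C}_{k+1}((\mathbb{Z}/p^\ell\mathbb{Z})^2)\approx (p^\ell)^{2k-1}$ that you flag as the delicate input are exactly what the paper establishes in the proof of Theorem \ref{MT1} and in Theorem \ref{target}, so relying on them is legitimate here.
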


\section{Characterization of the equivalence relation in terms of the $\text{SL}_2(R)$ action}

Our main tool in reducing the problem of $(k+1)$-point configurations to the $k=1$ case is the fact that we can express the equivalence relation in terms of the action of the special linear group; with some exceptions, tuples $x$ and $y$ are equivalent if and only if there exists a unique $g\in \text{SL}_2$ such that for each $i$, we have $y^i=gx^i$.  In order to use this, we need to bound the number of exceptions to this rule.  This is easy in the finite field case, and a little more tricky in the $\mathbb{Z}/p^\ell\mathbb{Z}$ case.  The goal of this section is to describe and and bound the number of exceptional configurations in each case.  We begin with a definition.

\begin{dfn}
Let $R$ be a ring.  A configuration $x=(x^1,...,x^{k+1})\in (R^2)^{k+1}$ is called \textbf{good} if there exist two indices $i,j$ such that $x^i\cdot x^{j\perp}$ is a unit.  A configuration is \textbf{bad} if it is not good.
\end{dfn}

As we will see, the good configurations are precisely those for which equivalence is determined by the action of $\text{SL}_2(R)$.  To prove this, we will need the following theorems about determinants of matrices over rings, which can be found in \cite{DF}, section 11.4.

\begin{thm}
\label{DF1}
Let $R$ be a ring, let $A_1,...,A_n$ be the columns of an $n\times n$ matrix $A$ with entries in $R$.  Fix an index $i$, and let $A'$ be the matrix obtained from $A$ by replacing column $A_i$ by $c_1A_1+\cdots+c_nA_n$, for some $c_1,...,c_n\in R$.  Then $\det(A')=c_i\det(A)$.
\end{thm}

\begin{thm}
\label{DF2}
Let $R$ be a ring, and let $A$ be an $n\times n$ matrix with entries in $R$.  The matrix $A$ is invertible if and only if $\det(A)$ is a unit in $R$.
\end{thm}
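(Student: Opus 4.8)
The plan is to reduce the equivalence to two standard facts about the determinant over a commutative ring $R$ (I take $R$ commutative throughout, as in the intended applications $R=\mathbb{F}_q$ and $R=\mathbb{Z}/p^\ell\mathbb{Z}$): multiplicativity, $\det(AB)=\det(A)\det(B)$, and the adjugate identity. Write $\mathrm{adj}(A)$ for the matrix whose $(i,j)$ entry is the $(j,i)$ cofactor $C_{ji}=(-1)^{i+j}\det(A_{ji})$, where $A_{ji}$ is obtained from $A$ by deleting row $j$ and column $i$. The central identity I would establish first is
\[
A\cdot\mathrm{adj}(A)=\mathrm{adj}(A)\cdot A=\det(A)\,I.
\]
This is precisely Laplace expansion: the diagonal entries reproduce $\det(A)$ by cofactor expansion along a single row, while each off-diagonal entry computes the determinant of a matrix with two equal rows, which must vanish. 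The vanishing is exactly the alternating behaviour of the determinant, and it is already implicit in Theorem \ref{DF1} — replacing one of two equal columns by their difference yields a zero column, whose determinant is $0$ by that theorem.

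With the adjugate identity in hand, the forward direction is immediate. If $\det(A)$ is a unit with inverse $u$, set $B=u\,\mathrm{adj}(A)$. Then $AB=u\det(A)I=I$ and likewise $BA=I$, so $A$ is invertible with $A^{-1}=\det(A)^{-1}\mathrm{adj}(A)$.

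For the reverse direction I would use multiplicativity. Suppose $A$ is invertible, so there is $B$ with $AB=I$. Taking determinants and using $\det(AB)=\det(A)\det(B)$ together with $\det(I)=1$ yields $\det(A)\det(B)=1$, which exhibits $\det(A)$ as a unit with inverse $\det(B)$. The two directions together give the claimed equivalence.

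The one genuine subtlety — and the step I expect to require the most care — is justifying that both multiplicativity and the adjugate identity hold over an \emph{arbitrary} commutative ring, rather than over a field, where these are usually first proved via row reduction and invertibility arguments that are unavailable here. The cleanest route is the principle of universal polynomial identities: each of these equalities is an identity between two polynomials in the matrix entries with integer coefficients, so it suffices to verify it generically over $\mathbb{Z}[x_{ij}]$ (respectively $\mathbb{Z}[x_{ij},y_{ij}]$ for multiplicativity), which is an integral domain where the classical field-theoretic arguments apply, and then to specialize through the evaluation homomorphism $x_{ij}\mapsto a_{ij}$. Once this transfer is granted, the argument above goes through verbatim and completes the proof.
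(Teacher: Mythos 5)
Your proof is correct. Note that the paper does not prove this statement at all --- it is quoted as a known fact with a citation to Dummit and Foote, Section 11.4 --- and your argument (adjugate identity for the direction ``$\det(A)$ a unit $\Rightarrow$ $A$ invertible,'' multiplicativity of $\det$ for the converse) is precisely the standard proof given in that reference, so there is nothing to reconcile. Your added care about transferring the adjugate and multiplicativity identities from fields to arbitrary commutative rings via specialization from $\mathbb{Z}[x_{ij}]$ is sound, though arguably unnecessary: both identities can be verified directly over any commutative ring from the Leibniz expansion, and the vanishing on repeated columns follows from Theorem \ref{DF1} exactly as you observe.
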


\begin{thm}
\label{DF3}
Let $R$ be a ring, and let $A$ and $B$ be $n\times n$ matrices with entries in $R$.  Then $\det(AB)=\det(A)\det(B)$.
\end{thm}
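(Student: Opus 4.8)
The plan is to prove this multiplicativity identity directly from the Leibniz (permutation-sum) expression for the determinant, viewing $\det$ as a function of the columns of a matrix. Throughout I would assume $R$ is commutative, which is the setting in which $\det$ is defined in \cite{DF} and which covers the cases $R=\mathbb{F}_q$ and $R=\mathbb{Z}/p^\ell\mathbb{Z}$ relevant to this paper. Recall that for an $n\times n$ matrix $M=(m_{ij})$ one sets $\det(M)=\sum_{\sigma\in S_n}\operatorname{sgn}(\sigma)\prod_{i=1}^n m_{i\sigma(i)}$, and the entire argument will be an exercise in manipulating this sum.

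First I would establish the two structural properties of $\det$ as a function of columns: that it is multilinear (linear in each column separately, with the others fixed) and alternating (it vanishes whenever two columns coincide, and hence changes sign under a transposition of two columns). Both follow mechanically from the Leibniz formula. Multilinearity holds because each summand $\prod_i m_{i\sigma(i)}$ contains exactly one entry drawn from each column, so fixing all but one column makes the sum linear in the remaining one. The alternating property follows by pairing each permutation $\sigma$ with $\sigma\tau$, where $\tau$ is the transposition swapping the two equal column indices; the two resulting products are equal but carry opposite signs, so they cancel. Commutativity of $R$ is used here to reorder the factors of each product freely.

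Next I would use the column description of matrix multiplication: writing $A_1,\ldots,A_n$ for the columns of $A$, the $j$-th column of $C=AB$ is $C_j=\sum_k b_{kj}A_k$. Expanding $\det(C)=\det(C_1,\ldots,C_n)$ by multilinearity in each column gives
\[
\det(AB)=\sum_{k_1,\ldots,k_n} b_{k_1 1}\cdots b_{k_n n}\,\det(A_{k_1},\ldots,A_{k_n}).
\]
By the alternating property every term with a repeated index vanishes, so only the terms indexed by a permutation $\sigma$ (with $k_i=\sigma(i)$) survive, and for each such term $\det(A_{\sigma(1)},\ldots,A_{\sigma(n)})=\operatorname{sgn}(\sigma)\det(A)$. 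Factoring out $\det(A)$ leaves $\det(AB)=\det(A)\sum_{\sigma}\operatorname{sgn}(\sigma)\prod_i b_{\sigma(i)i}$, and the remaining sum is exactly $\det(B)$; this is the Leibniz formula for $\det(B^{T})=\det(B)$, the equality of a matrix and its transpose determinant being a short separate lemma proved by the reindexing $\sigma\mapsto\sigma^{-1}$.

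The main obstacle is not any single deep step but the careful permutation bookkeeping: verifying the alternating and sign behavior from the raw Leibniz sum and correctly matching the surviving multilinear terms to permutations with the right signs. An alternative route that sidesteps some of this is the universal-coefficients argument: the claimed identity is a polynomial identity with integer coefficients in the $2n^2$ matrix entries, so it suffices to verify it in the integral domain $\mathbb{Z}[a_{ij},b_{ij}]$; embedding this domain in its fraction field reduces the claim to the classical case over a field, and specialization then yields the result over every commutative ring. I would present the direct argument as the main proof, since it is self-contained over an arbitrary commutative $R$, and note the universal-coefficients approach as a cleaner alternative.
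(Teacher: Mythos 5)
The paper does not prove this statement at all: Theorem \ref{DF3} is quoted as a standard fact from \cite{DF}, Section 11.4, alongside Theorems \ref{DF1} and \ref{DF2}, so there is no argument in the paper to compare yours against. Your proof is the correct and standard one (Leibniz formula, multilinearity and the alternating property in the columns, then expansion of $\det(AB)$ with the transpose lemma to identify the surviving sum as $\det(B)$), and the universal-coefficients reduction you mention is a legitimate alternative. Your remark that ``ring'' must be read as ``commutative ring'' is well taken --- the paper's blanket hypothesis is slightly loose, and commutativity is genuinely used both in the cancellation argument for the alternating property and in reordering the factors when matching terms to permutations.
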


We are now ready to prove that equivalence of good configurations is given by the action of the special linear group.

\begin{lem}
\label{SL2}
Let $R$ be a ring, and let $x,y$ be good configurations such that $x^i\cdot x^{j\perp}=y^i\cdot y^{j\perp}$ for every pair of indices $i,j$.  Then there exists a unique $g\in \text{SL}_2(R)$ such that $y^i=gx^i$ for each $i$.
\end{lem}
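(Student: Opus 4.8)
The plan is to manufacture the element $g$ from a single invertible $2\times 2$ block of the configuration and then show that the shared perpendicular products force $g$ to carry every remaining vector $x^i$ onto $y^i$.

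First I would exploit goodness: choose indices $a,b$ with $x^a\cdot x^{b\perp}$ a unit, and form the $2\times 2$ matrices $X$ and $Y$ whose columns are $x^a,x^b$ and $y^a,y^b$ respectively. A direct computation identifies $\det X$ with $x^b\cdot x^{a\perp}=-\,x^a\cdot x^{b\perp}$, a unit, so $X$ is invertible by Theorem \ref{DF2}; moreover the hypothesis applied to the pair $(b,a)$ gives $\det Y=\det X$, so $Y$ is invertible as well. I would then set $g=YX^{-1}$. Theorem \ref{DF3} yields $\det g=\det Y\,(\det X)^{-1}=1$, so $g\in\text{SL}_2(R)$, and by construction $gx^a=y^a$ and $gx^b=y^b$.

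The heart of the argument is upgrading these two equalities to all indices. Since $X$ is invertible, $\{x^a,x^b\}$ is a basis of $R^2$, so for each $i$ there are unique $c_1,c_2\in R$ with $x^i=c_1x^a+c_2x^b$. The point is that $c_1,c_2$ are determined by perpendicular products alone: applying Theorem \ref{DF1} to $X$, replacing one column at a time by $x^i$, gives $c_1\det X=\det[x^i\; x^b]=x^b\cdot x^{i\perp}$ and $c_2\det X=\det[x^a\; x^i]=x^i\cdot x^{a\perp}$. Carrying out the identical computation for the basis $\{y^a,y^b\}$ produces coefficients $d_1,d_2$ satisfying $d_1\det Y=y^b\cdot y^{i\perp}$ and $d_2\det Y=y^i\cdot y^{a\perp}$. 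Because the hypothesis equates the relevant perpendicular products and $\det X=\det Y$ is a unit, I conclude $c_1=d_1$ and $c_2=d_2$. Hence $y^i=c_1y^a+c_2y^b=c_1(gx^a)+c_2(gx^b)=g(c_1x^a+c_2x^b)=gx^i$, as desired.

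Uniqueness is then immediate: any $g'$ with $g'x^i=y^i$ for every $i$ agrees with $g$ on the columns $x^a,x^b$ of the invertible matrix $X$, so $g'X=gX$ and therefore $g'=g$. I expect the main obstacle to be precisely the middle step. A priori $g$ is built from only the two distinguished indices, and nothing forces it to respect the other vectors; the substance of the lemma is the observation that the invariants $x^i\cdot x^{j\perp}$ encode exactly the coordinates of each $x^i$ in the basis $\{x^a,x^b\}$, and since $y$ carries the same invariants, $y^i$ has the same coordinates in $\{y^a,y^b\}$. Everything else---invertibility, the determinant computation, and uniqueness---is a routine application of Theorems \ref{DF1}--\ref{DF3}.
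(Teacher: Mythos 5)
Your proposal is correct and follows essentially the same route as the paper: build $g=YX^{-1}$ from an invertible $2\times 2$ block guaranteed by goodness, use Theorem \ref{DF1} to show the coordinates of each remaining $x^i$ in the basis $\{x^a,x^b\}$ are determined by the perpendicular products (hence match those of $y^i$ in $\{y^a,y^b\}$), and deduce uniqueness from the invertibility of $X$. The only cosmetic difference is that you make explicit the identity $\det Y=\det X$ and the Cramer-style formulas for both coefficients, which the paper leaves partly to an ``analogous argument.''
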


\begin{proof}
Because $x$ and $y$ are good, there exist indices $i$ and $j$ such that $x^i\cdot x^{j\perp}$ is a unit; equivalently, the determinant of the $2\times 2$ matrix with columns $x^i$ and $x^j$ is a unit.  Denote this matrix by $(x^i\ x^j)$.  By theorem \ref{DF2}, this matrix is invertible.  Let

\[
g=(y^i\ y^j)(x^i\ x^j)^{-1}.
\]

Since $g(x^i\ x^j)=(gx^i\ gx^j)$, it follows that $y^i=gx^i$ and $y^j=gx^j$.  Also note that by Theorem \ref{DF3}, we have $\det(g)=1$.  Let $n$ be any other index.  We want to write $x^n=ax^i+bx^j$; this amounts to solving the matrix equation

\[
\begin{pmatrix}
x_1^i & x_1^j \\
x_2^i & x_2^j
\end{pmatrix}
\begin{pmatrix}
a \\
b
\end{pmatrix}
=x^n
\]

Since we have already established the matrix $(x^i\ x^j)$ is invertible, we can solve for $a$ and $b$.  Similarly, let $y^n=a'y^i+b'y^j$.  By Theorem \ref{DF1}, we have $\det(x^i\ x^n)=b\det(x^i\ x^j)$ and $\det(y^i\ y^n)=b'\det(y^i\ y^j)$.  It follows that $b=b'$, and an analogous argument yields $a=a'$.  Therefore, 

\[
gx^n=g(ax^i+bx^j)=agx^i+bgx^j=ay^i+by^j=y^n.
\]

So, we have established existance.  To prove uniqueness, note that $g$ must satisfy $g(x^i\ x^j)=(y^i\ y^j)$, and since $(x^i\ x^j)$ is invertible we can solve for $g$.
\end{proof}

Now that we know that good tuples allow us to use the machinery we need, we must prove that the bad tuples are negligible.

\begin{lem}
\label{countbad}
Let $R$ be a ring and let $E\subset R^2$.  We have the following:
\begin{enumerate}[i]
	\item If $R=\mathbb{F}_q$, then $E^{k+1}$ contains $\lesssim q^k|E|$ bad tuples.  In particular, if $|E|\gtrsim q^{1+\e}$ for any constant $\e>0$, the number of bad tuples in $E^{k+1}$ is $o(|E|^{k+1})$.
	\item If $R=\mathbb{Z}/p^\ell \mathbb{Z}$, the number of bad tuples in $R^{k+1}$ is $\lesssim p^{(2\ell-1)(k+1)+1}$.  In particular, if $|E|\gtrsim p^{2\ell-1+\frac{1}{k+1}+\e}$ for any constant $\e>0$, then the number of bad tuples in $E^{k+1}$ is $o(|E|^{k+1})$.
\end{enumerate}
\end{lem}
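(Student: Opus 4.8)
The plan is to observe that $x^i\cdot x^{j\perp}=\pm\det(x^i\ x^j)$, so a configuration is bad precisely when every $2\times 2$ determinant $\det(x^i\ x^j)$ is a non-unit. I would handle the two cases by first understanding the structure of bad tuples over a field and then, for the ring case, reducing modulo $p$.

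For part (i), over $\mathbb{F}_q$ a non-unit is simply $0$, so $x$ is bad iff $\det(x^i\ x^j)=0$ for every pair, i.e. the vectors $x^1,\dots,x^{k+1}$ are pairwise linearly dependent. I would show this forces all of them to lie on a single line through the origin: if $x^m$ is the first nonzero vector, then $\det(x^m\ x^i)=0$ puts every $x^i$ in the $1$-dimensional subspace $L=\mathrm{span}(x^m)$. To count, fix the least index $m$ with $x^m\neq 0$ (at most $|E|$ choices for $x^m$); the earlier entries are forced to be $0$, the line $L$ is determined, and each of the remaining $k+1-m$ entries lies in $E\cap L$, giving at most $q$ choices each. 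Summing the bound $|E|\,q^{k+1-m}$ over $m$ (plus the single all-zero tuple) yields $\lesssim q^k|E|$. The ``in particular'' claim is then immediate: $q^k|E|=o(|E|^{k+1})$ as soon as $q=o(|E|)$, which holds when $|E|\gtrsim q^{1+\e}$.

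For part (ii), the key observation is that reduction modulo $p$ is a ring homomorphism, and an element of $\mathbb{Z}/p^\ell\mathbb{Z}$ is a non-unit iff it is divisible by $p$, i.e. iff its image in $\mathbb{F}_p$ vanishes. Since $\overline{x^i\cdot x^{j\perp}}=\bar x^i\cdot \bar x^{j\perp}$, a tuple $x\in(R^2)^{k+1}$ is bad over $R=\mathbb{Z}/p^\ell\mathbb{Z}$ if and only if its reduction $\bar x\in(\mathbb{F}_p^2)^{k+1}$ is bad over $\mathbb{F}_p$. I would therefore count bad tuples fiberwise over the reduction map $(R^2)^{k+1}\to(\mathbb{F}_p^2)^{k+1}$: each of the $2(k+1)$ coordinates has $p^{\ell-1}$ preimages, so every fiber has size $p^{2(\ell-1)(k+1)}$. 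By the part (i) analysis with $q=p$ and $E=\mathbb{F}_p^2$, the number of bad tuples over $\mathbb{F}_p$ is at most $\sum_L |L|^{k+1}\le (p+1)p^{k+1}\approx p^{k+2}$, where $L$ ranges over the $p+1$ lines through the origin. Multiplying, the total number of bad tuples is $\lesssim p^{k+2}\cdot p^{2(\ell-1)(k+1)}$, and simplifying the exponent gives $k+2+2(\ell-1)(k+1)=(2\ell-1)(k+1)+1$, as claimed. The final assertion follows by comparing with $|E|^{k+1}\gtrsim p^{\left(2\ell-1+\frac{1}{k+1}+\e\right)(k+1)}=p^{(2\ell-1)(k+1)+1+\e(k+1)}$, so the ratio is $\lesssim p^{-\e(k+1)}\to 0$.

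The routine parts are the structural claim in (i) and the exponent bookkeeping in (ii); the one genuinely load-bearing idea is the reduction-mod-$p$ step, which collapses the ring condition to a condition over $\mathbb{F}_p$ and lets the entire count factor through the field case. The only point requiring care is that badness depends only on the reduction mod $p$ — this rests on the fact that non-units in $\mathbb{Z}/p^\ell\mathbb{Z}$ are exactly the elements of the maximal ideal $pR$, together with the fact that the determinant commutes with reduction.
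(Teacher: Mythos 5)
Your proof is correct, and while part (i) is essentially the paper's argument (over a field, badness forces all points onto a single line through the origin, giving $\lesssim q^k|E|$ tuples), your part (ii) takes a genuinely different and cleaner route. The paper splits bad tuples over $\mathbb{Z}/p^\ell\mathbb{Z}$ into those with all scalar coordinates units and the rest: the all-units case is handled by an explicit map $\psi$ that multiplies each first coordinate by $p^{\ell-1}$ (in effect extracting the mod-$p$ information), and the remaining tuples are dismissed by a separate count. You instead observe that badness is purely a mod-$p$ condition, since $x^i\cdot x^{j\perp}$ is a non-unit iff it vanishes mod $p$ and reduction commutes with the form; hence the bad tuples are exactly the preimage, under the reduction map $(R^2)^{k+1}\to(\mathbb{F}_p^2)^{k+1}$, of the bad tuples over $\mathbb{F}_p$, and counting the $\lesssim p^{k+2}$ bad tuples over $\mathbb{F}_p$ and multiplying by the fiber size $p^{2(\ell-1)(k+1)}$ gives exactly the exponent $(2\ell-1)(k+1)+1$. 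This buys you two things. First, there is no case analysis at all. Second, you sidestep a soft spot in the paper's write-up: the paper asserts that the number of tuples with \emph{at least one} non-unit coordinate is $p^{2(\ell-1)(k+1)}$, but that quantity is the count of tuples with \emph{all} coordinates non-units; tuples mixing unit and non-unit coordinates are far more numerous than that, and they are also not covered by the $\psi$ argument, which needs every second coordinate to be a unit. Your fiberwise count treats all bad tuples uniformly, so nothing falls through that gap, and it would be a legitimate (arguably preferable) replacement for the paper's proof of part (ii).
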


\begin{proof}
We first prove the first claim.  Since the only non-unit of $\mathbb{F}_q$ is 0, a bad tuple must consist of $k+1$ points which all lie on a line through the origin.  Therefore, we may choose $x^1$ to be anything in $E$, after which the next $k$ points must be chosen from the $q$ points on the line through the origin and $x^1$.  \\

To prove the second claim, first observe that the number of tuples where at least one coordinate is a non-unit is $p^{2(\ell-1)(k+1)}$, which is less then the claimed bound.  So, it suffices to bound the set of bad tuples where all coordinates are units.  Let $B$ be this set.  Define

\[
\psi(x_1^1,x_2^1,\cdots ,x_1^{k+1},x_2^{k+1})=(p^{\ell-1}x_1^1,x_2^1,\cdots ,p^{\ell-1}x_1^{k+1},x_2^{k+1}).
\]

If $x\in B$, then $x^i\cdot x^{j\perp}$ is a non-unit, meaning it is divisible by $p$, and 

\[
(p^{\ell-1}x_1^i,x_2^i)\cdot (p^{\ell-1}x_1^j,x_2^j)=p^{\ell-1}x^i\cdot x^{j\perp}=0.
\]

Therefore, $\psi$ maps bad tuples $x$ to tuples $y$ with $y^i\cdot y^{j\perp}=0$, or $y_1^iy_2^j-y_1^jy_2^i=0$.  Rearranging, using the fact that the second coordinate of each $y^i$ is a unit, we conclude that $\frac{y_1^i}{y_2^i}$ is a constant independent of $i$ which is divisible by $p^{\ell-1}$.  In other words, each $y^i$ is on a common line through the origin and a point $(n,1)$ where $p^{\ell-1}|n$.  There are $p$ such lines, and once we fix a line there are $p^{\ell(k+1)}$ choices of tuples $y$.  Therefore, $|\psi(B)|\leq p\cdot p^{\ell(k+1)}$.  Finally, we observe that the map $\psi$ is $p^{(\ell-1)(k+1)}$-to-1.  This gives us the claimed bound on $|B|$.
\end{proof}

\begin{lem}
\label{flemma}
Let $R$ be either $\mathbb{F}_q$ or $\mathbb{Z}/p^\ell\mathbb{Z}$.  Let $E\subset R^2$, and let $G\subset E^{k+1}$ be the set of good tuples.  Suppose $|E|\gtrsim q^{1+\e}$ if $R=\mathbb{F}_q$ and $|E|\gtrsim p^{2\ell-1+\frac{1}{k+1}+\e}$ if $R=\mathbb{Z}/p^\ell\mathbb{Z}$.  For $g\in \text{SL}_2(R)$, define $f(g)=\sum_x E(x)E(gx)$.  Then

\[
|E|^{2(k+1)}\lesssim \mathcal{C}_{k+1}(E)\sum_{g\in\text{SL}_2(R)} f(g)^{k+1}.
\]
\end{lem}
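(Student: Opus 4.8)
The plan is to read $\sum_{g}f(g)^{k+1}$ as a count of triples and then convert that count, via Lemma \ref{SL2}, into a count of pairs of equivalent good configurations. First I would expand the power. Since $f(g)=\sum_x E(x)E(gx)$,
\[
f(g)^{k+1}=\sum_{x^1,\dots,x^{k+1}}\prod_{i=1}^{k+1}E(x^i)E(gx^i),
\]
so that $\sum_{g}f(g)^{k+1}$ counts triples $(x,g,y)$ with $x,y\in E^{k+1}$, $g\in\text{SL}_2(R)$, and $y^i=gx^i$ for every $i$. All terms are nonnegative, so discarding the contribution of bad $x$ gives $\sum_g f(g)^{k+1}\geq(\text{number of such triples with } x \text{ good})$.

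Next I would evaluate the good contribution exactly using Lemma \ref{SL2}. The key observation is that the areas are $\text{SL}_2$-invariant: writing $(x^i\ x^j)$ for the matrix with the indicated columns, one has $x^i\cdot x^{j\perp}=\det(x^i\ x^j)$, and hence $y^i\cdot y^{j\perp}=\det(gx^i\ gx^j)=\det(g)\,x^i\cdot x^{j\perp}=x^i\cdot x^{j\perp}$ since $\det g=1$. Thus if $x$ is good and $y=gx$, then $y$ is good and $x\sim y$; conversely, for good $x\sim y$ the element $g$ with $y=gx$ exists and is unique by Lemma \ref{SL2}. The map $(x,g,y)\mapsto(x,y)$ is therefore a bijection between good triples and pairs $(x,y)$ of good equivalent configurations in $E^{k+1}$, which gives
\[
\sum_{g}f(g)^{k+1}\geq\sum_{c}n_c^2,
\]
where the sum is over the good equivalence classes and $n_c$ denotes the number of configurations of $E^{k+1}$ lying in class $c$.

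Then I would apply Cauchy--Schwarz and absorb the bad tuples. Let $G$ be the number of good tuples in $E^{k+1}$; since the number of good classes is at most $\mathcal{C}_{k+1}(E)$, Cauchy--Schwarz gives $\sum_{c}n_c^2\geq G^2/\mathcal{C}_{k+1}(E)$. By Lemma \ref{countbad}, the hypotheses $|E|\gtrsim q^{1+\e}$ (resp.\ $|E|\gtrsim p^{2\ell-1+\frac{1}{k+1}+\e}$) force the number of bad tuples to be $o(|E|^{k+1})$, so $G=|E|^{k+1}-o(|E|^{k+1})\gtrsim|E|^{k+1}$ and hence $G^2\gtrsim|E|^{2(k+1)}$. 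Combining the displays yields $\sum_{g}f(g)^{k+1}\gtrsim|E|^{2(k+1)}/\mathcal{C}_{k+1}(E)$, which rearranges to the claim.

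I expect the only genuine subtlety to lie in the bookkeeping around goodness rather than in any analytic estimate. One must note that being good or bad is an invariant of the equivalence class (so that good classes are well defined and the bad tuples split off cleanly), and one must be careful that the bijection onto good equivalent pairs uses \emph{both} directions of Lemma \ref{SL2}: existence of $g$ to produce a pair from a triple, and uniqueness of $g$ to produce a triple from a pair. Once these points and the area-invariance computation are settled, the remainder is a routine double-counting argument together with Cauchy--Schwarz, with Lemma \ref{countbad} supplying precisely the bound needed to guarantee $G\gtrsim|E|^{k+1}$.
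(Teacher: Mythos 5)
Your proposal is correct and is essentially the paper's own argument run in the opposite direction: the paper starts from Cauchy--Schwarz in the form $|G|^2\leq |G/\sim|\cdot|\{(x,y)\in G\times G:x\sim y\}|$, invokes Lemma \ref{SL2} to rewrite the pair count as a sum over $g$, and extends to $E^{k+1}$ to reach $\sum_g f(g)^{k+1}$, using Lemma \ref{countbad} exactly as you do to get $|G|\approx|E|^{k+1}$. The ingredients, their roles, and the bookkeeping about goodness are the same, so no substantive difference.
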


\begin{proof}
By Cauchy-Schwarz, we have

\[
|G|^2\leq |G/\sim|\cdot |\{(x,y)\in G\times G:x\sim y\}|.
\]

By assumption and Lemma \ref{countbad}, $|E|^{k+1}\approx |G|$, and therefore the left hand side above is $\approx |E|^{2(k+1)}$.  Since $G\subset E^{k+1}$ the right hand side above is $\leq \mathcal{C}_{k+1}(E)|\{(x,y)\in G\times G:x\sim y\}|$.  It remains to prove $|\{(x,y)\in G\times G: x\sim y\}|\leq \sum_{g\in\text{SL}_2(R)} f(g)^{k+1}$.  By lemma \ref{SL2},

\[
|\{(x,y)\in G\times G: x\sim y\}|=\sum_{x,y\in G}\sum_{\substack{g \\ y=gx}} 1.
\]

By extending the sum over $G$ to one over all of $E^{k+1}$, we bound the above sum by

\begin{align*}
&\sum_{x,y\in E^{k+1}}\sum_{\substack{g \\ y=gx}} 1 \\
=&\sum_x E(x^1)\cdots E(x^{k+1})\sum_g E(gx^1)\cdots E(gx^{k+1}) \\
=&\sum_g\left(\sum_{x^1} E(x^1)E(gx^1)\right)^{k+1} \\
=&\sum_g f(g)^{k+1}
\end{align*}

\end{proof}

\section{Lifting $L^2$ estimates to $L^{k+1}$ estimates}

In both the case $R=\mathbb{F}_q$ and $R=\mathbb{Z}/p^\ell\mathbb{Z}$, results are known for pairs of points, which is essentially the $k=1$ case.  The finite field version was studied in \cite{HI}, and the ring of integers modulo $p^\ell$ was studied in \cite{CIP}.  In section 2, we defined a function $f$ on $\text{SL}_2(R)$ and related the number of equivalence classes determined by a set to the sum $\sum_g f(g)^{k+1}$.  Since results are known for the $k=1$ case, we have information about the sum $\sum_g f(g)^2$.  We wish to turn that into a bound for $\sum_g f(g)^{k+1}$.  This is achieved with the following lemma.

\begin{lem}
\label{induction}
Let $S$ be a finite set, and let $F:S\to \mathbb{R}_{\geq 0}$.  Let 

\[
A=\frac{1}{|S|}\sum_{x\in S}F(x)
\]

denote the average value of $F$, and 

\[
M=\sup_{x\in S}F(x)
\]

denote the maximum.  Finally, suppose

\[
\sum_{x\in S}F(x)^2=A^2|S|+R.
\]

Then there exist constants $c_k$, depending only on $k$, such that

\[
\sum_{x\in S}F(x)^{k+1}\leq c_k(M^{k-1}R+A^{k+1}|S|).
\]
\end{lem}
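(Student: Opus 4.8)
The plan is to induct on $k$, using the known $k=1$ case as the base. A useful preliminary observation is that $R$ is really a variance: since $A|S|=\sum_x F(x)$, expanding the square gives $\sum_x (F(x)-A)^2 = \sum_x F(x)^2 - A^2|S| = R$, so in particular $R\geq 0$ and the quantity $M^{k-1}R$ appearing on the right is nonnegative. For the base case $k=1$ the hypothesis reads $\sum_x F(x)^2 = A^2|S|+R$, which is precisely the desired inequality with $c_1=1$ (indeed with equality).

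For the inductive step, I would suppose the bound holds for $k$ with constant $c_k$ and bound $\sum_x F(x)^{k+2}$ by splitting off the mean:
\[
\sum_x F(x)^{k+2}=\sum_x F(x)^{k+1}\bigl(F(x)-A\bigr)+A\sum_x F(x)^{k+1}.
\]
The second sum is immediately handled by the inductive hypothesis together with $A\leq M$, giving $A\sum_x F(x)^{k+1}\leq A\,c_k\bigl(M^{k-1}R+A^{k+1}|S|\bigr)\leq c_k\bigl(M^kR+A^{k+2}|S|\bigr)$. The crux is the first sum. Since $\sum_x (F(x)-A)=0$, I may subtract the constant $A^{k+1}$ for free and then factor the difference of powers:
\[
\sum_x F(x)^{k+1}\bigl(F(x)-A\bigr)=\sum_x\bigl(F(x)^{k+1}-A^{k+1}\bigr)\bigl(F(x)-A\bigr)=\sum_x\bigl(F(x)-A\bigr)^2\sum_{j=0}^{k}F(x)^{k-j}A^{j}.
\]
Each inner sum has $k+1$ terms, each at most $M^k$ since $0\leq F(x),A\leq M$, so the whole expression is at most $(k+1)M^k\sum_x(F(x)-A)^2=(k+1)M^kR$. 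Combining the two pieces yields $\sum_x F(x)^{k+2}\leq (c_k+k+1)\bigl(M^kR+A^{k+2}|S|\bigr)$, closing the induction with $c_{k+1}=c_k+k+1$.

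The only step I expect to present a genuine obstacle is controlling the fluctuation sum $\sum_x F(x)^{k+1}(F(x)-A)$. The naive route $\sum_x F(x)^{k+2}\leq M\sum_x F(x)^{k+1}$ is too lossy, since it inflates the main term $A^{k+1}|S|$ to $MA^{k+1}|S|$ and never produces the sharp $M^{k-1}R$. The whole point is to exploit $\sum_x(F(x)-A)=0$ in order to replace $F(x)^{k+1}$ by $F(x)^{k+1}-A^{k+1}$; the extra factor of $F(x)-A$ this manufactures combines with the one already present to produce $(F(x)-A)^2$, whose sum is exactly $R$, while the leftover factored polynomial is bounded crudely by $(k+1)M^k$. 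Everything beyond this is bookkeeping of the constants, which satisfy $c_1=1$ and $c_{k+1}=c_k+k+1$.
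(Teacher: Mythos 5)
Your proof is correct, but it takes a cleaner route than the paper's. Both arguments are inductions whose engine is the variance identity $\sum_x(F(x)-A)^2=R$ together with crude bounds $F,A\le M$ everywhere else, but the decompositions differ. The paper expands $F^{k+1}=(F-A)^kF+\sum_{j=0}^{k-1}\binom{k}{j}(-1)^{k-j+1}A^{k-j}F^{j+1}$, bounds the leading term by $M^{k-1}R$, and then must invoke the inductive hypothesis for \emph{every} lower moment $\sum_xF^{j+1}$ with $j<k$; this is a strong induction with binomial-coefficient bookkeeping, and the constants are only pinned down by the recursion $k\binom{k}{j}c_j\le c_k$ (e.g.\ $c_k=2^{k^2}$). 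You instead peel off a single factor, $\sum_xF^{k+2}=\sum_xF^{k+1}(F-A)+A\sum_xF^{k+1}$, use $\sum_x(F-A)=0$ to replace $F^{k+1}$ by $F^{k+1}-A^{k+1}$ in the fluctuation term, and factor the difference of powers to manufacture $(F-A)^2$ directly, so that only the immediately preceding moment is needed. This buys an ordinary (rather than strong) induction, a one-line treatment of the error term, and the much better constants $c_{k+1}=c_k+k+1$, i.e.\ $c_k=\binom{k+1}{2}$, versus the paper's $2^{k^2}$. Since the lemma is only ever applied with an unspecified implied constant, the improvement is cosmetic for the application, but your argument is the tidier of the two. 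One small point worth making explicit (you do note it): the final absorption step $(c_k+k+1)M^kR+c_kA^{k+2}|S|\le(c_k+k+1)(M^kR+A^{k+2}|S|)$ and the estimate $A\,M^{k-1}R\le M^kR$ both rely on $R\ge 0$, which your opening observation $R=\sum_x(F(x)-A)^2$ supplies.
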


\begin{proof}
We proceed by induction.  For the base case, let $c_1=1$ and observe that the claimed bound is the one we assumed for $\sum_x F(x)^2$.  Now, let $\{c_k\}$ be any sequence such that $k\binom{k}{j}c_j\leq c_k$ holds for all $j<k$; for example, $c_k=2^{k^2}$ works.  Now, suppose the claimed bound holds for all $1\leq j<k$, and also observe that the bound is trivial for $j=0$.  By direct computation, we have

\begin{align*}
&\sum_{x\in S}(F(x)-A)^2 \\
=&\sum_{x\in S}F(x)^2-2A\sum_{x\in S}F(x)+A^2|S| \\
=&\sum_{x\in S}F(x)^2-A^2|S| \\
=&R.
\end{align*}

We also have

\[
\sum_{x\in S}F(x)^{k+1}=\sum_{x\in S}(F(x)-A)^kF(x)+\sum_{j=0}^{k-1}\binom{k}{j}(-1)^{k-j+1}A^{k-j}\sum_{x\in S}F(x)^{j+1}.
\]

To bound the first term, we simply use the trivial bound.  Since $F(x)\leq M$ for all $x$, $A\leq M$, and $F(x),A\geq 0$, we conclude $|F(x)-A|\leq M$ for each $x$.  Therefore,

\[
\sum_{x\in S}(F(x)-A)^kF(x)\leq M^{k-1}\sum_{x\in S}(F(x)-A)^2=M^{k-1}R.
\]

To bound the second term, we use the inductive hypothesis and the triangle inequality.  We have

\begin{align*}
&\left|\sum_{j=0}^{k-1}\binom{k}{j}(-1)^{k-j+1}A^{k-j}\sum_{x\in S}F(x)^{j+1}\right| \\
\leq &k\cdot\sup_{0\leq j<k} \binom{k}{j}A^{k-j}\sum_{x\in S}F(x)^{j+1} \\
\leq &k\cdot\sup_{0\leq j<k} \binom{k}{j}A^{k-j}c_j(M^{j-1}R+A^{j+1}|S|) \\
\leq & c_k \cdot\sup_{0\leq j<k}(A^{k-j}M^{j-1}R+A^{k+1}|S|)
\end{align*}

Since $A\leq M$, it follows that $A^{k-j}M^{j-1}R\leq M^{k-1}R$ for any $j<k$, so the claimed bound holds.
\end{proof}

\section{Some lemmas about the action of $\text{SL}_2(R)$}

\begin{lem}
\label{action}
Let $G$ be a finite group acting transitively on a finite set $X$.  Define $\ph:X\times X\to\mathbb{N}$ by $\ph(x,y)=|\{g\in G:gx=y\}|$.  We have

\[
\ph(x,y)=\frac{|G|}{|X|}
\]

for every pair $x,y$.  If $h:X\to\mathbb{C}$ and $x_0\in X$, then

\[
\sum_{g\in G} h(gx_0)=\frac{|G|}{|X|}\sum_{x\in X}h(x).
\]

\end{lem}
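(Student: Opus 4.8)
The claim to prove (Lemma \ref{action}): For a finite group $G$ acting transitively on a finite set $X$, the function $\phi(x,y) = |\{g : gx = y\}|$ equals $|G|/|X|$ for all pairs, and consequently $\sum_g h(gx_0) = \frac{|G|}{|X|}\sum_{x} h(x)$.

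This is a standard orbit-stabilizer / Burnside-flavored fact. Let me think about how I'd prove it.

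The plan is to recognize this as a packaging of the orbit--stabilizer theorem together with a bookkeeping step. First I would prove the claim that $\ph(x,y)=|G|/|X|$ for every pair. Fix $x\in X$ and let $\text{Stab}(x)=\{g\in G:gx=x\}$ denote its stabilizer. By transitivity the orbit of $x$ is all of $X$, so the orbit--stabilizer theorem gives $|G|=|X|\cdot|\text{Stab}(x)|$, i.e. $|\text{Stab}(x)|=|G|/|X|$. This value is independent of $x$.

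Next I would identify the solution set $\{g\in G:gx=y\}$ as a coset of the stabilizer. Again by transitivity there exists at least one $g_0\in G$ with $g_0x=y$. For any other $g$, the condition $gx=y$ is equivalent to $g_0^{-1}gx=x$, i.e. $g_0^{-1}g\in\text{Stab}(x)$, which means $g\in g_0\,\text{Stab}(x)$. Conversely every element of the coset $g_0\,\text{Stab}(x)$ sends $x$ to $y$. Hence $\{g:gx=y\}=g_0\,\text{Stab}(x)$, and since left multiplication by $g_0$ is a bijection this set has exactly $|\text{Stab}(x)|=|G|/|X|$ elements. This establishes $\ph(x,y)=|G|/|X|$.

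Finally I would deduce the summation identity by partitioning the index set $G$ according to the value $gx_0$. Grouping the sum over $g$ by where $x_0$ is sent yields
\[
\sum_{g\in G}h(gx_0)=\sum_{x\in X}\Big(\sum_{\substack{g\in G\\ gx_0=x}}1\Big)h(x)=\sum_{x\in X}\ph(x_0,x)\,h(x)=\frac{|G|}{|X|}\sum_{x\in X}h(x),
\]
where the last equality uses the just-proved constancy of $\ph$. Since $G$ acts transitively the inner count is nonzero for every $x$, so every point of $X$ genuinely appears.

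I do not expect a serious obstacle here, as the statement is essentially elementary group theory. The only point requiring care is the coset identification in the second step, which relies on having a fixed witness $g_0$ supplied by transitivity; all subsequent manipulations are routine.
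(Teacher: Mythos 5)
Your proof is correct, but it takes a genuinely different route from the paper. You argue structurally: by orbit--stabilizer, $|\mathrm{Stab}(x)|=|G|/|X|$, and the solution set $\{g:gx=y\}$ is the coset $g_0\,\mathrm{Stab}(x)$ for any witness $g_0$ with $g_0x=y$, hence has that same cardinality; the summation identity then follows by fibering $G$ over the value of $gx_0$. The paper instead uses a double-counting argument: it computes $\sum_{x,y}\ph(x,y)=|G||X|$ directly, separately shows that $\ph$ is constant (by transporting solutions of $gz=w$ to solutions of $gx=y$ via composition with group elements realizing $x\mapsto z$ and $w\mapsto y$, giving an inequality in each direction), and then divides by $|X|^2$. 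Your version makes the underlying algebraic structure (a coset of the stabilizer) explicit and gets the constancy of $\ph$ for free from the $x$-independence of the stabilizer's order, whereas the paper's version avoids invoking orbit--stabilizer at all and is self-contained at the level of counting. Both are complete and the derivation of the summation identity is essentially the same change-of-variables in each.
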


\begin{proof}
The second statement follows from the first by a simple change of variables.  To prove the first, we have

\[
\sum_{x,y\in X}\ph(x,y)=\sum_{g\in G}\sum_{\substack{x,y\in X \\ gx=y}}1.
\]

On the right, for any fixed $g$, one can choose any $x$ and there is a unique corresponding $y$, so the inner sum is $|X|$ and the right hand side is therefore $|G||X|$.  On the other hand, $\ph$ is constant.  To prove this, let $x,y,z,w\in X$ and let $h_1,h_2\in G$ such that $h_1x=z$ and $h_2w=y$.  This means for any $g$ with $gz=w$, we have $(g_2gh_1)x=y$, so $\ph(z,w)\leq \ph(x,y)$.  By symmetry, equality holds.  If $c$ is the constant value of $\ph(x,y)$, the left hand side above must be $c|X|^2$, and therefore $c=\frac{|G|}{|X|}$ as claimed.
\end{proof}

\begin{lem}
\label{SL2size}
We have $|\text{SL}_2(\mathbb{F}_q)|=q^3-q$ and $|\text{SL}_2(\mathbb{Z}/p^\ell\mathbb{Z})|=p^{3\ell}-p^{3\ell-2}$.
\end{lem}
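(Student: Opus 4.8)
The plan is to compute both orders by realizing $\text{SL}_2(R)$ as the kernel of the determinant homomorphism $\det:\text{GL}_2(R)\to R^\times$, so that $|\text{SL}_2(R)|=|\text{GL}_2(R)|/|R^\times|$ as soon as this map is known to be surjective. Surjectivity is immediate in both cases: for any unit $u$, the matrix with diagonal entries $u,1$ and zero off-diagonal has determinant $u$, hence is invertible by Theorem \ref{DF2} and maps to $u$. By Theorem \ref{DF3} the determinant is a group homomorphism, and its kernel is by definition $\text{SL}_2(R)$, so the index formula applies. Everything then reduces to counting invertible matrices and units.

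For $R=\mathbb{F}_q$, I would count $\text{GL}_2(\mathbb{F}_q)$ by choosing columns: the first column may be any nonzero vector ($q^2-1$ choices), and the second may be any vector not in the line spanned by the first ($q^2-q$ choices), giving $|\text{GL}_2(\mathbb{F}_q)|=(q^2-1)(q^2-q)$. Since $|\mathbb{F}_q^\times|=q-1$, this yields $|\text{SL}_2(\mathbb{F}_q)|=(q^2-1)(q^2-q)/(q-1)=q(q^2-1)=q^3-q$.

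For $R=\mathbb{Z}/p^\ell\mathbb{Z}$, the key observation is that reduction mod $p$ gives a ring homomorphism $M_2(\mathbb{Z}/p^\ell\mathbb{Z})\to M_2(\mathbb{F}_p)$ under which a matrix is invertible exactly when its reduction is: by Theorem \ref{DF2} a matrix is invertible iff its determinant is a unit of $\mathbb{Z}/p^\ell\mathbb{Z}$, which happens iff the determinant is nonzero mod $p$, i.e. iff the reduced matrix is invertible over $\mathbb{F}_p$. Since each of the four entries has $p^{\ell-1}$ preimages under reduction, the reduction map is $p^{4(\ell-1)}$-to-one, and restricting to invertible matrices gives $|\text{GL}_2(\mathbb{Z}/p^\ell\mathbb{Z})|=p^{4(\ell-1)}|\text{GL}_2(\mathbb{F}_p)|=p^{4(\ell-1)}(p^2-1)(p^2-p)$. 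With $|(\mathbb{Z}/p^\ell\mathbb{Z})^\times|=p^{\ell-1}(p-1)$, dividing yields $|\text{SL}_2(\mathbb{Z}/p^\ell\mathbb{Z})|=p^{3(\ell-1)}(p^3-p)=p^{3\ell}-p^{3\ell-2}$, after simplifying the finite-field factor exactly as in the $\mathbb{F}_q$ case.

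The only genuinely delicate point is the ring case: one must verify that ``invertible'' and ``nonzero determinant mod $p$'' really coincide for matrices over $\mathbb{Z}/p^\ell\mathbb{Z}$, and that this permits counting $\text{GL}_2$ by pulling $\text{GL}_2(\mathbb{F}_p)$ back along reduction. This is precisely where Theorem \ref{DF2} is needed; the remainder is bookkeeping. An alternative that stays inside the special linear group is to analyze the reduction homomorphism $\text{SL}_2(\mathbb{Z}/p^\ell\mathbb{Z})\to\text{SL}_2(\mathbb{F}_p)$ directly, showing it is surjective with kernel of size $p^{3(\ell-1)}$: a matrix of the form $I+pB$ has determinant $1$ iff a single linear congruence on the entries of $B$ holds, which can be solved uniquely for one entry in terms of the other three. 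This gives the same count and may be preferred if one wishes to avoid $\text{GL}_2$ altogether.
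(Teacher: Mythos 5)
Your proof is correct, but it takes a genuinely different route from the paper's. The paper counts solutions to $ad-bc=1$ directly in both cases, splitting according to whether $a$ is zero (respectively, a unit in the ring case): when $a$ is invertible, $b$ and $c$ are free and $d$ is determined; otherwise $b,c$ are forced to be units and one of them determines the rest. Your argument is instead structural: you realize $\text{SL}_2(R)$ as the kernel of the surjective homomorphism $\det:\text{GL}_2(R)\to R^\times$ and divide, counting $\text{GL}_2(\mathbb{F}_q)$ by choosing independent columns, and counting $\text{GL}_2(\mathbb{Z}/p^\ell\mathbb{Z})$ by observing that invertibility depends only on the reduction mod $p$, so that $\text{GL}_2(\mathbb{Z}/p^\ell\mathbb{Z})$ is the full $p^{4(\ell-1)}$-to-one preimage of $\text{GL}_2(\mathbb{F}_p)$. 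Each step checks out, including the key equivalence ``invertible over $\mathbb{Z}/p^\ell\mathbb{Z}$ iff determinant is a unit iff determinant is nonzero mod $p$,'' and the arithmetic simplifies to $q^3-q$ and $p^{3\ell}-p^{3\ell-2}$ as required. What your approach buys is conceptual clarity and generality: it explains why the ring count is exactly $p^{3(\ell-1)}$ times the field count, avoids casework on non-units entirely, and extends verbatim to $n\times n$ matrices over any finite local ring. What the paper's approach buys is self-containment: it needs no quotient-group machinery or counting of $\text{GL}_2$, only the bare definition of $\text{SL}_2$ as a solution set. One small caution: your closing alternative via the reduction map $\text{SL}_2(\mathbb{Z}/p^\ell\mathbb{Z})\to\text{SL}_2(\mathbb{F}_p)$ asserts surjectivity without proof; that step is standard but not free (one must lift a matrix of unit determinant and correct the determinant, or invoke Hensel's lemma), so as written only your main argument is complete.
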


\begin{proof}
We are counting solutions to the equation $ad-bc=1$ where $a,b,c,d\in\mathbb{F}_q$.  We consider two cases.  If $a$ is zero, then $d$ can be anything, and we must have $bc=1$.  This means $b$ can be anything non-zero, and $c$ is determined.  So, there are $q^2-q$ solutions with $a=0$.  With $a\neq 0$, $b$ and $c$ can be anything, and $d$ is determined, giving $q^3-q^2$ solutions in this case.  So, there are $(q^3-q^2)+(q^2-q)$ total solutions. \\

Next, we want to count solutions to $ad-bc=1$ with $a,b,c,d\in\mathbb{Z}/p^\ell\mathbb{Z}$.  The arguments are essentially the same as in the proof of the finite field case, but slightly more complicated because there are non-zero elements which are still not units.  We again consider separately two cases according to whether $a$ is a unit or not.  If $a$ is a unit, then $b,c$ can be anything and then $d$ is determined, so there are $(p^{\ell}-p^{\ell-1})p^{2\ell}$ such solutions.  If $a$ is not a unit, then $b$ and $c$ must be units, as otherwise $1$ would be divisible by $p$.  So there are $p^{\ell-1}$ choices for $a$, $p^{\ell}$ choices for $d$, $p^{\ell}-p^{\ell-1}$ for $b$, and $c$ is determined.  Putting this together, we get the claimed number of solutions.
\end{proof}

\section{Proof of Theorem \ref{MT1}}
We are now ready to prove theorem \ref{MT1}.

\begin{proof}
First observe that good tuples are equivalent to $\approx q^{3}$ distinct tuples, so there are $\approx q^{2k-1}$ equivalence classes of good tuples.  Since the only non-unit in the finite field case is 0, the bad tuples are all in the same equivalence class.  So, our goal is to prove $\mathcal{C}_{k+1}(E)\gtrsim q^{2k-1}$.  We first must prove the estimate

\[
\sum_g f(g)^2=\frac{|E|^4}{q}+O(q^2|E|^2).
\]

We expand the sum on the left hand side and change variables to obtain

\[
\sum_g f(g)^2=\sum_{x^1,x^2,y^1,y^2}E(x^1)E(x^2)E(y^1)E(y^2)\left(\sum_{\substack{g \\ gx=y}}1\right).
\]

We first observe we may ignore the pairs $x,y$ which are on a line through the origin.  This is because if $x^2=tx^1$ and $y^2=sy^1$, there will exist $g$ with $gx=y$ if and only if $t=s$, in which case there are $\approx q$ choices for $g$.  So, we have $|E|$ choices for $x^1$ and $y^1$, $q$ choices for $t$, and $\approx q$ choices for $g$ giving an error of $O(q^2|E|^2)$, as claimed.  For all other pairs $x,y$, the inner sum in $g$ is 1 if $x\sim y$ and 0 otherwise.  Therefore, if $\nu(t)=|\{(x,y)\in E\times E:x\cdot y^\perp=t\}|$, we have

\[
\sum_g f(g)^2=O(|E|^2q^2)+\sum_t\sum_{\substack{x^1, x^2, y^1, y^2 \\ x^1\cdot x^{2\perp}=t \\ y^1\cdot y^{2\perp}=t}}E(x^1)E(x^2)E(y^1)E(y^2)=O(|E|^2q^2) +\sum_t\nu(t)^2.
\]

The proof of theorem 1.4 in \cite{HI} shows that $\nu(t)=\frac{|E|^2}{q}+O(|E|q^{1/2})$, so this gives

\[
\sum_t \nu(t)^2-\frac{|E|^4}{q}=\sum_t \left(\nu(t)-\frac{|E|^2}{q}\right)^2=O(|E|^2q^2),
\]

which proves the equation above.  We now apply lemma \ref{induction} with $F=f$.  Lemmas \ref{SL2size} and \ref{action} imply 

\[
A=\frac{1}{|\text{SL}_2(\mathbb{F}_q)|}\sum_xE(x)\sum_gE(gx)=\frac{1}{(q^2-1)}|E|^2=\frac{|E|^2}{q^2}+O\left(\frac{|E|^2}{q^4}\right)
\]

and

\[
|S|=q^3+O(q^2).
\]

Putting this together gives

\[
A^2|S|=\frac{|E|^4}{q}+O\left(\frac{|E|^4}{q^2}\right),
\]

and therefore

\[
\sum_g f(g)^2=A^2|S|+R
\]

with $R=O(q^2|E|^2)$.  Finally, we observe that $f$ has maximum $M\leq |E|$.  Therefore, lemma \ref{induction} gives

\[
\sum_g f(g)^{k+1}\lesssim q^2|E|^{k+1}+\frac{|E|^{2(k+1)}}{q^{2k-1}}.
\]

Together with lemma \ref{flemma}, this gives

\[
|E|^{2(k+1)}\lesssim \mathcal{C}_{k+1}(E)\left(q^2|E|^{k+1}+\frac{|E|^{2(k+1)}}{q^{2k-1}}\right).
\]

If the second term on the right is bigger, we get the result for free.  If the first term is bigger, we get

\[
\mathcal{C}_{k+1}(E)\gtrsim \frac{|E|^{k+1}}{q^2}.
\]

This will be $\gtrsim q^{2k-1}$ when $|E|\gtrsim q^{2-\frac{1}{k+1}}$, as claimed.

\end{proof}

\section{Size of $\mathcal{C}_{k+1}((\mathbb{Z}/p^\ell\mathbb{Z})^2)$}

Since $|\text{SL}_2(R)|\approx |R|^3$, we expect each tuple in $(R^2)^{k+1}$ to be equivalent to $\approx |R|^3$ other tuples, and therefore we expect the number of congruence classes to be $|R|^{2k-1}$.  In the finite field case, this was proved as the first step of the proof of Theorem \ref{MT1}, but the proof in the $R=\mathbb{Z}/p^\ell\mathbb{Z}$ is more complicated so we will prove it here, separately from the proof of Theorem \ref{MT2} in the next section.

\begin{thm}
\label{target}
We have $\mathcal{C}_{k+1}((\mathbb{Z}/p^\ell\mathbb{Z})^2)\approx (p^\ell)^{2k-1}$.  More precisely, the good $(k+1)$-point configurations of $(\mathbb{Z}/p^\ell\mathbb{Z})^2$ determine $\approx (p^\ell)^{2k-1}$ classes, and the bad configurations determine $o((p^\ell)^{2k-1})$ classes.
\end{thm}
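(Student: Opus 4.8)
The plan is to split the count of configurations into good and bad, and to handle each by the same strategy used in the finite field case but with careful attention to the presence of nonunit (yet nonzero) elements. For the good configurations, I would first show that each good tuple is equivalent to $\approx (p^\ell)^3$ distinct tuples. By Lemma \ref{SL2}, if $x$ is good then the tuples equivalent to $x$ are exactly the images $gx$ for $g\in\text{SL}_2(\mathbb{Z}/p^\ell\mathbb{Z})$, and uniqueness of $g$ in that lemma means the map $g\mapsto gx$ is injective; hence the equivalence class of $x$ has size exactly $|\text{SL}_2(\mathbb{Z}/p^\ell\mathbb{Z})|=p^{3\ell}-p^{3\ell-2}\approx (p^\ell)^3$ by Lemma \ref{SL2size}. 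Since the total number of tuples is $(p^{2\ell})^{k+1}=(p^\ell)^{2(k+1)}$, and by Lemma \ref{countbad}(ii) the bad tuples number $\lesssim p^{(2\ell-1)(k+1)+1}=o((p^\ell)^{2(k+1)})$, the good tuples also number $\approx (p^\ell)^{2(k+1)}$. Dividing by the common class size $\approx (p^\ell)^3$ gives $\approx (p^\ell)^{2k-1}$ good classes, which is the main term.

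For the bad configurations, the goal is only the upper bound $o((p^\ell)^{2k-1})$ on the number of classes they determine, and the crudest possible bound suffices: the number of classes is at most the number of bad tuples. By Lemma \ref{countbad}(ii) this is $\lesssim p^{(2\ell-1)(k+1)+1}$, so I would check that this quantity is $o((p^\ell)^{2k-1})=o(p^{\ell(2k-1)})$. Comparing exponents, I need $(2\ell-1)(k+1)+1 < \ell(2k-1)$, i.e.\ $2\ell k+2\ell-k-1+1 < 2\ell k-\ell$, which simplifies to $3\ell < k+1$. This inequality fails for large $\ell$, so the naive tuple count is \emph{not} enough, and this is where the real work lies.

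The main obstacle is therefore getting a genuinely good bound on the number of equivalence \emph{classes} among bad tuples, rather than just counting bad tuples. The key structural fact is that in a bad tuple every pairwise area $x^i\cdot x^{j\perp}$ is a nonunit, i.e.\ divisible by $p$, so the equivalence class of a bad tuple is determined by a collection of nonunit values. I would exploit the scaling map $\psi$ from the proof of Lemma \ref{countbad}(ii): after applying $\psi$, bad tuples (with all coordinates units) become collinear tuples lying on one of $p$ lines through the origin, so their configurations are parametrized by far fewer data than a generic tuple. More precisely, I expect the invariants $x^i\cdot x^{j\perp}$ of a bad tuple to take values in the ideal $(p)$, and to show that the number of attainable tuples of such invariants is $o((p^\ell)^{2k-1})$; handling also the tuples with some nonunit coordinate (bounded crudely by $\lesssim p^{2(\ell-1)(k+1)}$, which I would verify is $o((p^\ell)^{2k-1})$ separately) then finishes the argument.

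I would organize the proof as follows. First establish the good-tuple count via Lemma \ref{SL2}, Lemma \ref{SL2size}, and Lemma \ref{countbad}(ii) as above. Second, reduce the bad-class count to bounding the number of distinct invariant-tuples $(x^i\cdot x^{j\perp})_{i,j}$ arising from bad configurations, since configurations with the same invariants lie in the same class and each class contributes one to $\mathcal{C}_{k+1}$. Third, use the collinearity structure forced by $\psi$ to bound the number of such invariant-tuples, obtaining the desired $o((p^\ell)^{2k-1})$ bound uniformly in $\ell$. The delicate step is the third one, where I must ensure the bound beats $(p^\ell)^{2k-1}$ even as $\ell$ grows, which the crude tuple count from Lemma \ref{countbad} does not.
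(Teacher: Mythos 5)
Your treatment of the good configurations is correct and is exactly the paper's argument: injectivity of $g\mapsto gx$ from Lemma \ref{SL2} gives classes of size $|\text{SL}_2(\mathbb{Z}/p^\ell\mathbb{Z})|\approx p^{3\ell}$, and dividing the $\approx p^{2\ell(k+1)}$ good tuples by this yields $\approx (p^\ell)^{2k-1}$ good classes. You also correctly diagnose that the naive bound (number of bad classes $\leq$ number of bad tuples) fails once $3\ell\geq k+1$; identifying that obstacle is the right instinct.

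However, your proposed repair for the bad classes has a genuine gap. You reduce to counting the attainable invariant tuples $(x^i\cdot x^{j\perp})_{i,j}$ with all entries in the ideal $(p)$, and propose to control these via the collinearity structure produced by the map $\psi$. But $\psi$ multiplies first coordinates by $p^{\ell-1}$, so it only retains the invariants modulo $p$; it tells you that every $x^i\cdot x^{j\perp}$ is divisible by $p$ (which you already knew) and nothing about how the $p^{\ell-1}$ possible values within $(p)$ are distributed among attainable tuples. Without the attainability constraint the count of non-unit invariant tuples is $p^{(\ell-1)\binom{k+1}{2}}$, which dwarfs $p^{\ell(2k-1)}$ for $k\geq 3$, so some genuinely new input is required and your outline does not supply it. (Your side remark that tuples with a non-unit coordinate number $\lesssim p^{2(\ell-1)(k+1)}=o(p^{\ell(2k-1)})$ is also false for large $\ell$, since $2(\ell-1)(k+1)-\ell(2k-1)=3\ell-2k-2>0$ there; those tuples cannot be discarded by a raw count either.) The paper's proof instead works with class \emph{sizes} rather than invariant values: it stratifies bad tuples by the exact power $m\geq 1$ such that $p^m$ divides every $x^i\cdot x^{j\perp}$ (the ``$m$-bad'' tuples, of which there are $\lesssim p^{\ell(2k-1)+3\ell-km}$), and proves via the adjugate identity $AB=\det(A)I_2$ that the fibers of $g\mapsto gx$ over an $m$-bad $x$ have size at most $p^{2m}$, so every $m$-bad class has at least $p^{3\ell-2m}$ elements. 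Dividing gives $\lesssim p^{\ell(2k-1)+(2-k)m}$ classes at each level $m$, which sums acceptably for $k\geq 3$; the cases $k=1,2$ are then handled separately by your invariant-counting idea (which does work there, since $\binom{k+1}{2}\leq 2k-1$). This lower bound on the size of bad classes, stratified by $m$, is the missing ingredient in your proposal.
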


\begin{proof}
We first establish that there are $\approx p^{\ell(2k-1)}$ classes of good tuples.  This is easy; if $x$ is a good tuple, we have seen the map $g\mapsto gx$ is injective, so each class has size $\approx p^{3\ell}$ and there are $p^{2\ell (k+1)}$ tuples, meaning there are $p^{2\ell(k+1)-3\ell}$ classes.  \\

It remains to bound the number of bad classes.  We first establish the $k=1,2$ cases.  When $k=1$, we want to prove there are $o(p^\ell)$ equivalence classes.  This is clear, because in the $k=1$ case we are looking at pairs $(x^1,x^2)$ whose class is determined by the scalar $x^1\cdot x^{2\perp}$.  The classes therefore correspond to the underlying set of scalars in $\mathbb{Z}/p^\ell\mathbb{Z}$, and the bad classes correspond to non-units.  In the $k=2$ case, we are looking at triples $(x^1,x^2,x^3)$ whose class is determined by the three scalars $(x^1\cdot x^{2\perp},x^2\cdot x^{3\perp},x^3\cdot x^{1\perp})$.  So, the space of equivalence classes can be identified with $((\mathbb{Z}/p^\ell\mathbb{Z})^2)^3$, and the bad classes correspond to triples of non-units.  \\

For $k\geq 3$, we use the following theorem, which is really just a more specific version of Theorem \ref{DF2}, also found in \cite{DF}, chapter 11.  \\

\begin{thm*}[\ref{DF2}']
For any $2\times 2$ matrix $A$, there exists a $2\times 2$ matrix $B$ with $AB=BA=(\det(A))I_2$, where $I_2$ is the $2\times 2$ identity matrix.
\end{thm*}

We also make a more specific version of the definition of good and bad tuples.  Namely, let$x$ be a $(k+1)$ point configuration in $\mathbb{Z}/p^\ell\mathbb{Z}$, and let $m\leq \ell$ be minimal with respect to the property that $p^m$ divides $x^i\cdot x^{j\perp}$ for every pair of indices $(i,j)$.  We say that $x$ is \textbf{$m$-bad}.  Observe that according to our previous definition, good tuples are $0$-bad and bad tuples are $m$-bad for some $m>0$.  Also observe that $m$-badness is preserved by equivalence, so we may define $m$-bad equivalence classes analogously.  An easy variant of the argument in Lemma \ref{countbad} shows that the number of $m$-bad tuples is $\lesssim p^{(2\ell-m)(k+1)+m}$; note that this bound can be rewritten as $p^{\ell(2k-1)+3\ell-km}$.  We claim that every $m$-bad equivalence class has at least $p^{3\ell-2m}$ elements.  It follows from the claim that there are $\lesssim p^{\ell(2k-1)+(2-k)m}$ $m$-bad classes, and since we may assume $k\geq 3$ the theorem follows from here.  To prove the claim, note that the equivalence class containing $x$ also contains $gx$ for any $g\in\text{SL}_2(\mathbb{Z}/p^\ell\mathbb{Z})$, so for a lower bound on the size of a class we need to determine the size of the image of the map $g\mapsto gx$.  First note that we may assume without loss of generality that each coordinate of $x^1$ is a unit.  This is because given $x$ we can shift any factor of $p$ from $x^1$ onto each other vector $x^i$ and obtain another representative of the same equivalence class.  Next, observe that if $x$ is $m$-bad and $gx=hx$, then by Theorem \ref{DF2}' we have $p^mg=p^mh$.  It follows that $h=g+p^{\ell-m}A$ for some matrix $A$ with entries between $0$ and $p^m$.  Using the fact that

\[
\det(A+B)=\det(A)+\det(B)+\mathcal{B}(A,B),
\]

where $\mathcal{B}$ is bilinear, we conclude that if $h=g+p^{\ell-m}A$ and $\det(g)=\det(h)=1$, we must have

\[
0=p^{2(\ell-m)}\det(A)+p^{\ell-m}\mathcal{B}(g,A).
\]

Let $m'$ be the minimal power of $p$ which divides all entries of $A$.  Since the entries of $g$ cannot all be divisible by $p$, it follows that $\ell-m+m'$ is the maximal power of $p$ which divides the second term above.  Since $2(\ell-m+m')$ divides the first term, it follows that both terms must be 0 for the equation to hold.  In particular, we must have $\mathcal{B}(g,A)=0$.  Since at least one entry of $g$ must be a unit, we can solve for one entry of $A$ in terms of the others.  Now observe that in order to have $gx=hx$, we must have $p^{\ell-m}Ax=0$.  In particular, $p^{\ell-m}Ax^1=0$.  Since each coordinate of $x^1$ is a unit, we may solve for another entry of the matrix $A$.  This means there are at most $p^{2m}$ choices for $A$, and hence the map $g\mapsto gx$ is at most $p^{2m}$-to-one.  It follows that $m$-bad classes have at least $p^{3\ell-2m}$ elements, as claimed.

\end{proof}

\section{Proof of Theorem \ref{MT2}}

\begin{proof}
In keeping with the rest of this paper, the proof of the $\mathbb{Z}/p^\ell\mathbb{Z}$ case is essentially the same as the finite field case, but more complicated casework is required to deal with non-units.  By our work in the previous section, our goal is to show $\mathcal{C}_{k+1}(E)\gtrsim (p^\ell)^{2k-1}$.  Following the line of reasoning in the proof of Theorem \ref{MT1}, we want to establish the estimate

\[
\sum_g f(g)^2=\frac{|E|^4}{p^\ell}+O(\ell^2|E|^2(p^\ell)^{3-\frac{1}{\ell}}).
\]

We have, after a change of variables,

\[
\tag{$*$}
\sum_g f(g)^2=\sum_{x^1,x^2,y^1}E(x^1)E(x^2)E(y^1)\sum_{\substack{g \\ gx^1=y^1}}E(gx^2).
\]

We first want to throw away terms where $x^1,y^1$ have non-units in their first coordinates.  Note that there are $\approx p^{4\ell-2}$ such pairs.  For each, there are $|E|$ many choices for $x^2$.  We claim that there are $\leq p^\ell$ choices of $g$ which map $x^1$ to $y^1$ under this constraint.  It follows from this claim that those terms contribute $\lesssim p^{5\ell-2}|E|$ to ($*$), which is less then the claimed error term.  To prove the claim, observe that we are counting solutions to the system of equations

\begin{align*}
ax_1^1+bx_2^1&=y_1^1 \\
cx_1^1+dx_1^1&=y_2^1\\
ad-bc&=1
\end{align*}

in $a,b,c,d$.  Since $x_1^1$ is a unit, we can solve the first two equations for $a$ and $c$, respectively.  Plugging these solutions into the third equation yields

\[
1=\frac{y_1^1}{x_1^1}d-\frac{y_2^1}{y_1^1}b.
\]

Since $y_1^1$ is a unit, for every $b$ there is a unique $d$ satisfying the equation.  This proves the claim.  Now, we want to remove all remaining terms from ($*$) corresponding to $x^1,x^2$ where $x^1\cdot x^{2\perp}$ is not a unit.  To bound this contribution, we observe that for any such pair, we can write $x^2=tx^1+k$, where $0<t<p$ and $k$ is a vector where both entries are non-units.  Therefore, there are $\leq |E|^2$ choices for $(x^1,y^1)$, there are $\leq p^{2\ell-1}$ choices for $x^2$, and there are $\leq p^\ell$ choices for $g$ as before.  This gives the bound $|E|^2p^{3\ell-1}$, smaller than the claimed error term.  This means, up to the error term, ($*$) can be written as

\[
\sum_{\substack{x^1,x^2,y^1,y^2 \\ x^1\cdot x^{2\perp}=y^1\cdot y^{2\perp}}}E(x^1)E(x^2)E(y^1)E(y^2)=\sum_t \nu(t)^2,
\]

where $\nu(t)=|\{(x,y)\in E\times E:x\cdot y^\perp=t\}|$.  This function was studied in \cite{CIP}; in that paper, it is proved that $\nu(t)=\frac{|E|^2}{q}+O(\ell|E|(p^\ell)^{\frac{1}{2}(2-\frac{1}{\ell})})$, leading to the claimed estimate for $\sum_g f(g)^2$, using the same reasoning as in the proof of Theorem \ref{MT1}.  Applying Lemma \ref{induction} and Lemma \ref{flemma} with $A\approx \frac{|E|^2}{p^{2\ell}},|S|\approx p^{3\ell},M\leq |E|, R=O(\ell^2 |E|^2(p^\ell)^{3-\frac{1}{\ell}})$ gives

\[
|E|^{2(k+1)}\lesssim \mathcal{C}_{k+1}(E)\left(\ell^2|E|^{k+1}(p^\ell)^{3-\frac{1}{\ell}}+\frac{|E|^{2(k+1)}}{p^{\ell(2k-1)}}\right).
\]

If the second term on the right is bigger, we get the result for free.  If the first term is bigger, we have

\[
\mathcal{C}_{k+1}(E)\gtrsim \frac{|E|^{k+1}}{\ell^2p^{3\ell-1}}.
\]

If $|E|\gtrsim \ell^{\frac{2}{k+1}}p^{\ell s}$, then this is $\gtrsim p^{\ell s(k+1)-3\ell+1}$, which is $\gtrsim p^{\ell(2k-1)}$ when $s\geq 2-\frac{1}{\ell(k+1)}$.

\end{proof}

\section{Proof of sharpness}

\begin{proof}
We first consider the finite field case.  Let $1\leq s<2-\frac{2}{k+1}$, and let $E$ be a union of $q^{s-1}$ circles of distinct radii.  Since each circle has size $\approx q$, this is a set of size $\approx q^s$.  Observe that for any $x\in E^{k+1}$ and any $g$ in the orthogonal group $O_2(\mathbb{F}_q)$, we have $gx\in E^{k+1}$.  Therefore, every configuration of points in $E$ is equivalent to at least $|O_2(\mathbb{F}_q)|\approx q$ other configurations.  This means that 

\[
\mathcal{C}_{k+1}(E)\lesssim q^{-1}|E|^{k+1}\approx q^{s(k+1)-1}=o(q^{2k-1}),
\]

where in the last step we use the assumed bound on $s$.  \\

Now, consider the $\mathbb{Z}/p^\ell\mathbb{Z}$ case.  Let $1\leq s<2-\min\left(\frac{2}{k+1},\frac{1}{\ell}\right)$.  We consider two different examples, according to which of $\frac{2}{k+1}$ or $\frac{1}{\ell}$ is smaller.  In the first case, the example that works for finite fields also works here; circles still have size $\approx p^\ell$, so nothing is changed.  In the second, let

\[
E=\{(t+pn,t+pm):0\leq t<p,0\leq m,n\leq p^{\ell-1}\}.
\]

Clearly $|E|=p^{2\ell -1}=(p^\ell)^{2-\frac{1}{\ell}}$, but it is also easy to check that $x\cdot y^\perp$ is never a unit for any $x,y\in E$.  Therefore, every configuration of points in $E$ is bad, and we have shown that this is $o(\mathcal{C}_{k+1}(p^{\ell(2k-1)}))$.

\end{proof}

\end{document}